\newtheorem{theorem}{Theorem}[section]
\newtheorem{lemma}[theorem]{Lemma}
\theoremstyle{definition}
\newtheorem{define}[theorem]{Definition}
\newtheorem{example}[theorem]{Example}
\newtheorem{proposition}[theorem]{Proposition}
\theoremstyle{remark}
\newtheorem{remark}[theorem]{Remark}
\numberwithin{equation}{section}
\DeclareMathOperator{\Hom}{Hom}
\DeclareMathOperator{\Ext}{Ext}
\DeclareMathOperator{\Der}{Der}
\DeclareMathOperator{\ann}{ann}
\DeclareMathOperator{\Sp}{Sp}
\DeclareMathOperator{\gr}{gr}
\DeclareMathOperator{\Var}{V}
\DeclareMathOperator{\image}{im}
\DeclareMathOperator{\pdim}{pdim}
\DeclareMathOperator{\depth}{depth}
\DeclareMathOperator{\Spec}{Spec}
\DeclareMathOperator{\Supp}{Supp}
\DeclareMathOperator{\rel}{rel}
\DeclareMathOperator{\Ch}{Ch}
\DeclareMathOperator{\im}{im}
\DeclareMathOperator{\pr}{pr}
\author{Daniel Bath}
\title{A noncommutative analogue of the Peskine--Szpiro Acyclicity Lemma}
\thanks{The author is supported by FWO grant \#G097819N and FWO grant \#12E9623N.}
\keywords{Peskine, Szpiro, acyclicity lemma, Spencer, D-module, logarithmic, Bernstein, Sato}
\subjclass[2010]{Primary 14F10, 32S20 Secondary: 16E05.}
\begin{document}
\sloppy
\maketitle
\begin{abstract}
    We present a variant of the Peskine--Szpiro Acyclicity Lemma, and hence a way to certify exactness of a complex of finite modules over a large class of (possibly) noncommutative rings. Specifically, over the class of Auslander regular rings. In the case of relative $\mathscr{D}_{X}$-modules, for example $\mathscr{D}_{X}[s_{1}, \dots, s_{r}]$-modules, the hypotheses have geometric realizations making them easier to authenticate. We demonstrate the efficacy of this lemma and its various forms by: independently recovering some results related to Bernstein--Sato polynomials; establishing a new result about quasi-free structures of free multi-derivations of hyperplane arrangements.
\end{abstract}

\section{Introduction}

The goal of this note is to find a noncommutative analogue of the famous Peskine--Szpiro Acyclicity Lemma:

\begin{lemma} \text{\normalfont (Peskine--Szpiro Acyclicity Lemma \cite{PeskineSzpiroAcyclicity})}
Suppose $R$ is a commutative, Noetherian, local ring and $0 \to M_{q} \to M_{q-1} \to \cdots \to M_{0}$ a complex of finite $R$-modules such that $\depth M_{j} \geq j$. If the first nonvanishing homology module $H_{i}$ of this complex occurs at $i > 0$, then $\depth H_{i} \geq 1$.  
\end{lemma}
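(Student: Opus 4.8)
The plan is to route everything through the \emph{depth lemma}: for a short exact sequence $0 \to A \to B \to C \to 0$ of finite modules over a Noetherian local ring $(R,\mathfrak{m})$ one has
\[
\depth C \geq \min\{\depth A - 1,\ \depth B\},\qquad \depth A \geq \min\{\depth B,\ \depth C + 1\},\qquad \depth B \geq \min\{\depth A,\ \depth C\},
\]
all of which fall out of the long exact $\Ext^{\bullet}_{R}(R/\mathfrak{m},-)$ sequence together with the identification $\depth M = \inf\{\, j : \Ext^{j}_{R}(R/\mathfrak{m},M) \neq 0 \,\}$ (and $\depth 0 = \infty$). Write the differentials as $d_{j}\colon M_{j} \to M_{j-1}$ and set $Z_{j} = \ker d_{j}$, $B_{j} = \image d_{j+1}$, so $H_{j} = Z_{j}/B_{j}$; here $i$ is the largest index with $H_{i} \neq 0$ (the first one met as the complex is read from the left), so $H_{j} = 0$ for all $j > i$, with $H_{i} \neq 0$ and $i \geq 1$.

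The one substantive move is to convert the vanishing of the homology above degree $i$ into a depth estimate on $B_{i}$. Since $H_{q} = 0$ the map $d_{q}$ is injective, so $\image d_{q} \cong M_{q}$ has depth $\geq q$; and since $H_{k} = 0$ for $q > k > i$, the sequences
\[
0 \longrightarrow \image d_{k+1} \longrightarrow M_{k} \longrightarrow \image d_{k} \longrightarrow 0 \qquad (q-1 \geq k \geq i+1)
\]
are exact (the maps being the inclusion of $\image d_{k+1}$ and the corestriction of $d_{k}$). I would then induct downward on $k$: granting $\depth \image d_{k+1} \geq k+1$, the depth lemma gives $\depth \image d_{k} \geq \min\{(k+1)-1,\ k\} = k$; carried down to $k = i+1$ this produces $\depth B_{i} = \depth \image d_{i+1} \geq i+1 \geq 2$.

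Everything else is formal. Since $B_{i} \subseteq Z_{i}$ and $Z_{i}/B_{i} = H_{i} \neq 0$, the submodule $Z_{i} \subseteq M_{i}$ is nonzero, so every associated prime of $Z_{i}$ is one of $M_{i}$; because $\depth M_{i} \geq i \geq 1$ rules out $\mathfrak{m}$, this gives $\depth Z_{i} \geq 1$. Applying the depth lemma one last time to $0 \to B_{i} \to Z_{i} \to H_{i} \to 0$,
\[
\depth H_{i} \;\geq\; \min\{\, \depth B_{i} - 1,\ \depth Z_{i} \,\} \;\geq\; \min\{\, i,\ 1 \,\} \;=\; 1,
\]
as claimed.

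I expect the write-up to be short. The one step deserving care is verifying that the truncated sequence above is genuinely exact all the way up — which is exactly where $H_{q} = 0$ (hence $d_{q}$ injective) enters — and keeping the indices straight in the downward induction; after that the conclusion is a mechanical application of the depth lemma, so I do not anticipate a real obstacle. (Note the hypotheses $\depth M_{j} \geq j$ for $j < i$ are never used.)
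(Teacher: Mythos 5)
Your proof is correct. The paper does not reprove this classical lemma (it is quoted from Peskine--Szpiro and only a strategy is sketched in the introduction), but your argument fills in the details along exactly the lines that sketch describes: break the complex into short exact sequences, chase depth through the long exact sequence of $\Ext_{R}^{\bullet}(R/\mathfrak{m},-)$, and use that positive depth passes to nonzero submodules, here applied to $\ker d_{i} \subseteq M_{i}$.
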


\noindent In practice, this is often used to verify exactness of a complex, not to learn about the depth of nonvanishing cohomology modules. 

The proof of Peskine--Szpiro's lemma has two main ingredients. First: work with the homological characterization of depth, that is, the index of the first nonvanishing $\Ext_{R}^{k}(R/\mathfrak{m}, -)$ module; break up the complex into short exact sequences; obtain lower bounds on depth for every module that appears in these sequences. Second: where exactness fails, use the homological characterization of depth to deduce: if a module has positive depth, so do any of its (nonzero) submodules. The second ingredient depends on the fact that our homological interpretation of depth arises from the covariant functor $\Hom_{R}(R/\mathfrak{m}, -)$. This blunt appeal to a covariant functor is why the lemma's conclusion ``$\depth H_{i} \geq 1$'' has the indelicate lower bound of $1$.

In a noncommutative setting, we cannot appeal to local-ness nor depth. By Auslander--Buchsbaum it is reasonable to instead work with projective dimension. So we do. Our homological characterization will then involve the vanishing of $\Ext_{R}^{k}(-, R)$ and, as such, arise from the contravariant functor $\Hom_{R}(-,R)$. This means the second ingredient used in the sketch above is inapplicable. Double dual $\Ext$-modules, i.e. $\Ext_{R}^{i}(\Ext_{R}^{j}(-, R), R)$, will help address this deficiency. 

Our lemma is for Auslander regular rings $A$ (Definition \ref{def- Auslander regular ring}) which are known to have well behaved dual $\Ext$ modules $\Ext^{k}_{A}(-, A)$, and well behaved double dual $\Ext$ modules $\Ext_{A}^{i}(\Ext_{A}^{j}(-, A), A)$. These rings contain the commutative, regular, Noetherian rings (Proposition \ref{prop- Bjork commutative auslander regular facts}) but also many other rings, especially noncommutative ones. The formalism of such rings has recently proved fertile in the realm of $\mathscr{D}_{X}$-modules, $\mathscr{D}_{X}[s_{1}, \dots, s_{r}]$-modules, and questions involving Bernstein--Sato polynomials and ideals. (The rings mentioned are Auslander regular.) Maisonobe realized their utility in \cite{Maisonobe} and Budur, van der Veer, Wu, and Zhou used \cite{ZeroLociI} some of his ideas to prove a deep relation between Bernstein--Sato ideals and local systems on the complement of a divisor. We recommend Bj\"{o}rk's Appendices in \cite{Bjork} for a primer on these rings, although this note is essentially self-sufficient. 

Before charting the path of this brief, let us state our noncommutative analogue of the Peskine--Szpiro Acyclicity Lemma. Note that the conclusion is in terms of the grade $j(N)$ of a module (Definition \ref{def- grade}), that is, the index of the first nonvanishing dual $\Ext$ module $\Ext_{A}^{k}(N, A)$. The following appears with proof as Lemma \ref{lemma- noncommutative acyclity lemma Auslander}:

\begin{lemma} \label{lemma - intro noncommutative Auslander regular acyclicity lemma}
\text{\normalfont (Noncommutative Peskine--Szpiro Acyclicity Lemma)}
Let $A$ be an Auslander regular ring and 
\[
M_{\bullet} := 0 \to M_{m} \to M_{m-1} \to \cdots \to M_{1} \to M_{0}
\]
a complex of finitely generated left $A$-modules such that $\pdim(M_{q}) \leq m-q$. Denote by $H_{q}$ the homology at slot $q$. Let $i > 0$ be the largest index so that $H_{i} \neq 0$. Then $j(H_{i}) \leq m - i < m.$
\end{lemma}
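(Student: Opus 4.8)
The plan is to analyse $R\Hom_A(M_\bullet, A)$ through its two hyperext spectral sequences and to read off the bound on $j(H_i)$ from the top row of one of them, using the two defining features of an Auslander regular ring: the Auslander condition and biduality. First I would reindex $M_\bullet$ cohomologically: let $C^\bullet$ be the cochain complex with $C^{-q} = M_q$ for $0 \le q \le m$, so that $\mathcal{H}^{-q}(C^\bullet) = H_q$; by the choice of $i$ we have $\mathcal{H}^{j}(C^\bullet) = 0$ for $j < -i$ and $\mathcal{H}^{-i}(C^\bullet) = H_i \neq 0$. Since $\pdim_A M_q \le m - q$, replacing each $M_q$ (placed in degree $-q$) by a projective resolution of length $\le m-q$ and totalising yields a bounded complex of projectives quasi-isomorphic to $C^\bullet$ and concentrated in cohomological degrees $[-m,0]$. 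Consequently $\Ext^n_A(C^\bullet, A) = 0$ for all $n > m$; equivalently, this is the vanishing recorded by the spectral sequence ${}'E_1^{p,q} = \Ext^q_A(M_p, A) \Rightarrow \Ext^{p+q}_A(C^\bullet, A)$, whose nonzero terms all satisfy $p + q \le m$.

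Next I would invoke the second hyperext spectral sequence ${}''E_2^{p,q} = \Ext^p_A(\mathcal{H}^{-q}(C^\bullet), A) = \Ext^p_A(H_q, A) \Rightarrow \Ext^{p+q}_A(C^\bullet, A)$, whose only nonzero rows are those with $0 \le q \le i$. The key point is that the top row $q = i$ receives no differentials: an incoming differential into ${}''E_r^{p,i}$ comes from ${}''E_r^{p-r,\,i+r-1}$, a subquotient of $\Ext^{p-r}_A(H_{i+r-1}, A)$, which vanishes because $H_{i+r-1} = 0$ for $r \ge 2$. Hence ${}''E_\infty^{p,i}$ is a submodule of ${}''E_2^{p,i} = \Ext^p_A(H_i, A)$, and each successive quotient ${}''E_r^{p,i}/{}''E_{r+1}^{p,i}$ is isomorphic to the image of the outgoing differential $d_r \colon {}''E_r^{p,i} \to {}''E_r^{p+r,\,i-r+1}$. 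Now ${}''E_r^{p+r,\,i-r+1}$ is a subquotient of $\Ext^{p+r}_A(H_{i-r+1}, A)$, so $\im d_r$ is a submodule of it, hence itself a subquotient of $\Ext^{p+r}_A(H_{i-r+1}, A)$; by the Auslander condition (together with the behaviour of grade along short exact sequences, subquotients of the dual Ext module $\Ext^k_A(-,A)$ have grade $\ge k$), every such $\im d_r$ therefore has grade $\ge p + r \ge p + 2$.

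To finish I would argue by contradiction. Suppose $j(H_i) \ge m - i + 1$, put $g := j(H_i)$ and $T := \Ext^g_A(H_i, A) = {}''E_2^{g,i} \neq 0$. Since $g + i > m$, the first paragraph gives $\Ext^{g+i}_A(C^\bullet, A) = 0$, and as ${}''E_\infty^{g,i}$ is a subquotient of this group it is zero. Thus the finite descending chain $T = {}''E_2^{g,i} \supseteq {}''E_3^{g,i} \supseteq \cdots \supseteq {}''E_\infty^{g,i} = 0$ (finite because $A$ has finite global dimension) exhibits $T$ as a module admitting a finite filtration all of whose graded pieces have grade $\ge g + 2$; hence $j(T) \ge g + 2$. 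On the other hand, over an Auslander regular ring the first nonvanishing dual Ext module of a nonzero finitely generated module is pure of its own grade, i.e. $j\!\left(\Ext^{j(N)}_A(N,A)\right) = j(N)$ for all such $N$ — this is the biduality input, ultimately the nonvanishing of the double dual $\Ext^{j(N)}_{A^{\mathrm{op}}}\!\left(\Ext^{j(N)}_A(N,A), A\right)$. Taking $N = H_i$ gives $j(T) = g$, contradicting $j(T) \ge g + 2$. Therefore $j(H_i) \le m - i$, and since $i > 0$ this forces $j(H_i) < m$.

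I expect the main obstacle to be the analysis of the second spectral sequence in the middle step. That the top row $q=i$ has no incoming differentials is exactly where the maximality of $i$ is used, and it is what lets that row speak directly to the abutment; the more delicate part is bounding the outgoing differentials out of that row, and the Auslander condition applied to the subquotients of $\Ext$-modules that occur as their targets is the essential substitute for the ``positive depth is inherited by submodules'' step of the commutative proof. The remaining point requiring care is having available the purity statement $j(\Ext^{j(N)}_A(N,A)) = j(N)$, which is precisely where biduality and the double dual Ext modules over Auslander regular rings enter.
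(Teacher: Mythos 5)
Your argument is correct and reaches the paper's conclusion by a genuinely different route. The paper works by hand: it splices the complex into short exact sequences to bound $\pdim(\im d_{i+1})$ (Step 1), then uses $0 \to \ker d_i \to M_i \to Q \to 0$ together with Auslander's condition to kill $\Ext^t_A(\Ext^t_A(\ker d_i, A), A)$ for $t > m-i$ (Step 2), and finally transfers this vanishing to $H_i$ through $0 \to \im d_{i+1} \to \ker d_i \to H_i \to 0$ (Step 3). You replace all of this bookkeeping with the two hyperext spectral sequences of $\Hom_A(C^\bullet, A)$: the first gives vanishing of the abutment in degrees $> m$, and the second lets you filter $\Ext^{j(H_i)}_A(H_i, A)$ by the images of the outgoing differentials from the top nonzero row. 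Both arguments close with the same two ingredients, Auslander's condition to bound grades and Proposition \ref{prop-Bjork grade of Ext modules} to derive the contradiction, so the conceptual core is shared; but the spectral sequence uniformly handles the case split ($i \le m-2$ versus $i = m-1, m$) that the paper must treat separately, and it makes explicit where maximality of $i$ enters (no incoming differentials into the row $q=i$). One step you state but do not prove is that \emph{subquotients}, not merely submodules, of $\Ext^k_A(M,A)$ have grade at least $k$; this is true but is not literally Definition \ref{def- Auslander regular ring}(c), and it deserves a line: if $N' \subseteq N \subseteq \Ext^k_A(M,A)$ then $j(N'), j(N) \ge k$ by Auslander's condition, and the long exact sequence of $\Ext_A(-,A)$ for $0 \to N' \to N \to N/N' \to 0$ sandwiches $\Ext^t_A(N/N', A)$ between vanishing terms for $t < k$, so $j(N/N') \ge k$. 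With that sentence added, your proof is complete.
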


\vspace{2mm}

The paper's outline is thus: In section 2, we first define Auslander regular rings, present the minimal pieces from the Appendices of \cite{Bjork} in order to state Lemma \ref{lemma- noncommutative acyclity lemma Auslander}, our ``noncommutative Peskine--Szpiro Acyclicity Lemma,'' and then we prove it. In the original the conclusion gives a rough lower bound on the depth of the first nonvanishing homology module; in the new version, the conclusion gives a much finer upper bound on the grade of the first nonvanishing homology module. In subsection 2.2 we compare and contrast the two versions of the lemma, sketching their respective strengths and weaknesses.

In section 3 we restrict to relative $\mathscr{D}_{X}$-modules, (Definition \ref{def- relative D-modules}, e.g. $\mathscr{D}_{X} \otimes_{\mathbb{C}} R$, $R$ a localization of a polynomial ring over $\mathbb{C}$), which are instances of modules over Auslander regular rings. Here grade has a nice geometric interpretation via characteristic varieties, so we provide a geometric version of our lemma: Lemma \ref{lemma- algebraic noncommutative lemma} for the algebraic case; Lemma \ref{lemma-noncommutative acyclicity analytic} for the local, analytic. 

While our motivation was for relative $\mathscr{D}_{X}$-modules and Bernstein--Sato constructions, we have proved Lemma \ref{lemma- noncommutative acyclity lemma Auslander} in the broader context of Auslander regular rings in the hope it will be useful to a larger audience. In our experience, even in the $\mathscr{D}_{X}$-module setting, it is difficult to construct resolutions without appealing to associated graded arguments; Lemma \ref{lemma- noncommutative acyclity lemma Auslander} (and its children Lemmas \ref{lemma- algebraic noncommutative lemma}, \ref{lemma-noncommutative acyclicity analytic}) provide alternative infrastructure. 

In the final subsection 3.4 we demonstrate this in the realm of Bernstein--Sato polynomials and the $\mathscr{D}_{X,\mathfrak{x}}[s]$-module generated by $f^{s}$: in Proposition \ref{prop- free, Saito, Spencer resolution} we recover an important result of Calder{\'o}n Moreno's and Narv{\'a}ez Macarro's (Theorem 1.6.4 of \cite{MorenoMacarroLogarithmic}) by an entirely different argument; in Proposition \ref{prop-SpencerAcyclicMultiDerivations} we consider a variant of a complex considered in \cite{QuasiFree} in the case of free multi-derivations of hyperplane arrangements, establishing its acyclicity for the first time. The conclusion of Proposition \ref{prop- free, Saito, Spencer resolution} is used to understand deep questions about the singular structure of $f$: it appears, as outlined in Remark \ref{rmk - future use case, tame}, in many approaches to the Logarithmic Comparison Theorem, the Strong Monodromy Conjecture, and the $-n/d$ Conjecture. In Example \ref{ex - Reiffen plane curve} we use this Proposition to say something novel about a well-studied example of a problematic plane curve (as well as free and Saito-holonmic divisors in general). The utility of Proposition \ref{prop-SpencerAcyclicMultiDerivations} is unclear: its connection to the singularity structure of the original hyperplane arrangement is mostly unexplored.

\vspace{2mm}

We would like to thank Uli Walther for helpful conversations and his interest in and encouragement about the problem. We would also like to thank Francisco Castro Jim{\'e}nez and Luis Narv{\'a}ez Macarro for their support and for the shared mathematics that helped inspire this work. Finally, we are grateful to the referees for their comments and suggestions, all of which improved the manuscript.

\section{Our Lemma for Auslander Regular Rings}

We work with rings $A$ that are both left and right Noetherian as well as finitely generated left (and sometimes right) $A$-modules. We further assume that $A$ has \emph{finite global homological dimension}, that is there exists a $\mu \in \mathbb{N}$ such that for any finitely generated left $A$-module $M$, the projective dimension of $M$ satisfies $\pdim(M) \leq \mu$. The smallest $\mu$ satisfying this condition is the global dimension of $A$ and is denoted by $\text{gdim} A = \mu$.\footnote{Actually we have defined finite \emph{left} global homological dimension and \emph{left} global dimension. There are symmetric \emph{right} variants by considering finitely generated right $A$-modules $N$ instead. As we have assumed $A$ is both left and right Noetherian, the left and right notions coincide: its left and right global dimension agree.}

Before defining our class of rings, we need the following classical homological notion:

\begin{define} \label{def- grade}
Let $A$ be a Noetherian (both left and right) ring of finite global homological dimension. For $M$ a finitely generated left $A$-module, the \emph{grade} $j(M)$ of M is the smallest integer $j(M)$ such that $\Ext_{A}^{j(M)}(M, A) \neq 0$. A similar definition of grade applies for right $A$-modules.
\end{define}

With this in hand, we have enough to define the Auslander regular rings:

\begin{define} \label{def- Auslander regular ring}
A ring $A$ is an \emph{Auslander regular ring} provided it satisfies the following conditions:
\begin{enumerate}[label=(\alph*)]
    \item A is Noetherian (both left and right);
    \item A has finite global homological dimension; 
    \item A satisfies \emph{Auslander's condition}, i.e. for any finitely generated left $A$-module $M$ and for any submodule $N \subseteq \Ext_{A}^{k}(M,A)$, the grade of $N$ is bounded below by $j(N) \geq k$.
\end{enumerate}
\end{define}

As alluded to before, Bj\"{o}rk's Appendices, especially A:IV, of \cite{Bjork} give a simultaneously thorough and elementary treatment of Auslander regular rings. We only require one of his propositions and we note that it is a consequence of applying Auslander's condition to the double complex (and attendant spectral sequence) built by: taking a projective resolution $P_{\bullet}$ of $M$, looking at $\Hom_{R}(P_{\bullet}, R)$, and resolving each $\Hom$ module. Here is the proposition we need:

\begin{proposition} (Bj\"{o}rk A:IV, Proposition 2.2 \cite{Bjork}) \label{prop-Bjork grade of Ext modules}
Let $A$ be an Auslander regular ring and $M$ a finitely generated left $A$-module. Then 
\[
j(\Ext_{A}^{j(M)}(M, A)) = j(M).
\]
\end{proposition}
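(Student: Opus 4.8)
The plan is to prove the two bounds $j(\Ext_A^{j(M)}(M,A)) \geq j(M)$ and $j(\Ext_A^{j(M)}(M,A)) \leq j(M)$ separately. Write $j := j(M)$ and $N := \Ext_A^{j}(M,A)$; since $M \neq 0$, the grade $j$ (Definition \ref{def- grade}) is a well-defined integer and $N \neq 0$. The lower bound is immediate: $N$ is a submodule of $\Ext_A^{j}(M,A)$ — all of it — so Auslander's condition (Definition \ref{def- Auslander regular ring}(c), with $k = j$) gives $j(N) \geq j$. Everything hinges on the reverse bound, i.e.\ on showing $\Ext_A^{j}(N,A) \neq 0$.

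For that I would run the double-complex argument indicated before the statement. Choose a finite resolution $P_\bullet \to M$ by finitely generated projective left $A$-modules, dualize to the complex $\Hom_A(P_\bullet, A)$ of finitely generated projective \emph{right} $A$-modules (whose $q$-th cohomology is $\Ext_A^{q}(M,A)$), take a Cartan--Eilenberg projective resolution of this complex, and apply $\Hom_A(-,A)$ to get a double complex of left $A$-modules. One of its two spectral sequences collapses: each term of $\Hom_A(P_\bullet,A)$ is projective, hence $A$-reflexive, so the total complex is quasi-isomorphic to $\Hom_A(\Hom_A(P_\bullet,A),A) \cong P_\bullet$ and the abutment is $M$ concentrated in degree $0$. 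The other spectral sequence is the biduality spectral sequence
\[
E_2^{p,q} = \Ext_A^{p}\big(\Ext_A^{q}(M,A),A\big) \;\Longrightarrow\; M \ \text{in total degree } 0 \ (\text{and } 0 \text{ in all other total degrees}),
\]
and it equips $M$ with a finite filtration $F^{\bullet}M$ whose associated graded pieces are exactly the terms $E_\infty^{p,p}$.

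From here the conclusion should follow from the shape of the $E_2$-page together with Auslander's condition, with no need to track differentials. First, $E_2^{p,p} = \Ext_A^{p}(\Ext_A^{p}(M,A),A) = 0$ for $p < j$ because already $\Ext_A^{p}(M,A) = 0$ there; hence $E_\infty^{p,p} = 0$ for $p < j$, and so $F^{j}M = M$. Second, Auslander's condition applied to $\Ext_A^{p}(M,A)$ (with $k = p$) says every submodule of $E_2^{p,p} = \Ext_A^{p}(\Ext_A^{p}(M,A),A)$ has grade $\geq p$; a short argument with the long exact $\Ext_A^{\bullet}(-,A)$-sequence then shows that every nonzero subquotient of $E_2^{p,p}$ has grade $\geq p$ as well. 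In particular $j(E_\infty^{p,p}) \geq p$ for every $p$, since $E_\infty^{p,p}$ is a subquotient of $E_2^{p,p}$. Now suppose, toward a contradiction, that $\Ext_A^{j}(N,A) = E_2^{j,j} = 0$. Then $E_\infty^{j,j} = 0$, so $F^{j}M = F^{j+1}M$, and together with $F^{j}M = M$ this presents $M$ as a finite iterated extension of the modules $E_\infty^{p,p}$ with $p \geq j+1$. If all of these vanish then $M = 0$, which is absurd; otherwise, since the grade of an extension is at least the minimum of the grades of its two terms, $j(M) \geq \min_{p \geq j+1} j(E_\infty^{p,p}) \geq j+1$. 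Either way this contradicts $j(M) = j$. Therefore $\Ext_A^{j}(N,A) \neq 0$, i.e.\ $j(N) \leq j$, and combined with the first paragraph $j(N) = j$.

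The step I expect to be the real obstacle is exactly this final non-vanishing of $E_2^{j,j}$. The naive hope — that the corner term $E_2^{j,j}$ has no room for differentials and hence survives to $E_\infty$ — fails, because the outgoing differential $d_r \colon E_r^{j,j} \to E_r^{j+r,\,j+r-1}$ can land inside the support region of the page; the fix is to bypass differentials entirely and instead read the grade of $M$ off its abutment filtration, applying Auslander's condition to \emph{every} diagonal term $E_\infty^{p,p}$ rather than just the corner. The remaining facts used — existence and convergence of the biduality spectral sequence, the collapse computing its abutment, and the monotonicity of grade under submodules, quotients and extensions (all from the long exact $\Ext_A^{\bullet}(-,A)$-sequence) — are standard.
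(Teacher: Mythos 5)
Your proof is correct, and it is essentially the argument the paper alludes to (and that Bj\"ork uses): the biduality spectral sequence from the double complex obtained by resolving $M$, dualizing, resolving $\Hom_A(P_\bullet,A)$, and dualizing again, with Auslander's condition feeding grade bounds into the abutment filtration. The paper itself only cites Bj\"ork and gives the one-line sketch you expanded, so there is nothing to contrast; the only thing worth flagging is that you implicitly invoke Auslander's condition for the right $A$-module $\Ext_A^p(M,A)$, which is the two-sided form standard for Auslander regular rings even though Definition~\ref{def- Auslander regular ring} states it only for left modules.
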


In particular, Proposition \ref{prop-Bjork grade of Ext modules} implies that for any finitely generated left module M over an Auslander-regular
ring A, we have
\begin{equation*}
    \Ext_A^{j(M)} (\Ext_A^{j(M)}(M, A), A) \neq 0.
\end{equation*}

\subsection{The Lemma}

With this minimal set-up we can now state and prove our noncommutative analogue of the Peskine--Szpiro Acyclicity Lemma:

\begin{lemma} \label{lemma- noncommutative acyclity lemma Auslander} \text{\normalfont (Noncommutative Peskine--Szpiro Acyclicity Lemma)}
Let $A$ be an Auslander regular ring and 
\[
M_{\bullet} := 0 \to M_{m} \to M_{m-1} \to \cdots \to M_{1} \to M_{0}
\]
a complex of finitely generated left $A$-modules such that $\pdim(M_{q}) \leq m-q$. Denote by $H_{q}$ the homology at slot $q$. Let $i > 0$ be the largest index so that $H_{i} \neq 0$. Then $j(H_{i}) \leq m - i < m.$
\end{lemma}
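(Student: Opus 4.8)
The plan is to mimic the classical Peskine--Szpiro argument, but to replace every appeal to depth and covariant $\Hom_A(A/\mathfrak m,-)$ with an appeal to grade and the contravariant functor $\Hom_A(-,A)$, using Auslander's condition and Proposition \ref{prop-Bjork grade of Ext modules} to keep the double-dual $\Ext$ modules under control. We argue by induction on $m$. First I would dispose of the degenerate cases: if $i = m$, then $H_m$ is a submodule of $\ker(M_m \to M_{m-1}) \subseteq M_m$, and since $\pdim M_m = 0$ means $M_m$ is projective, any nonzero submodule $N$ of $M_m$ has $\Ext^0_A(N,A) = \Hom_A(N,A) \neq 0$ (a projective module embeds into a free module, and a nonzero submodule of a free module admits a nonzero functional), so $j(H_i) = 0 \leq m - i$. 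This uses precisely the feature the introduction flags: for the contravariant functor, ``$j(N) = 0$'' is the easy statement about submodules, not ``$j(N) \leq 1$''.

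For the inductive step, I would split the complex at slot $0$. Let $Z_1 = \ker(M_1 \to M_0)$ and $B_0 = \image(M_1 \to M_0)$, so $H_0 = M_0 / B_0$ and there is a short exact sequence $0 \to B_0 \to M_0 \to H_0 \to 0$, together with the truncated complex $M_{\bullet}' := 0 \to M_m \to \cdots \to M_2 \to Z_1 \to 0$ of length $m-1$. The truncated complex has homology $H_q$ in slot $q$ for $q \geq 2$, and its slot-$1$ homology is $Z_1 / \image(M_2 \to M_1)$, which is exactly $H_1$. For the induction to apply I need $\pdim(Z_1) \leq m - 1$: the long exact sequence for $\Ext_A(-,A)$ applied to $0 \to Z_1 \to M_1 \to B_0 \to 0$ gives $\pdim(Z_1) \leq \max(\pdim M_1, \pdim B_0 - 1)$, and $0 \to B_0 \to M_0 \to H_0 \to 0$ gives $\pdim(B_0) \leq \max(\pdim M_0, \pdim H_0 + 1)$; one checks, using $\pdim M_0 \leq m$ and $\pdim M_1 \leq m-1$, that this forces $\pdim Z_1 \leq m-1$ provided $\pdim H_0$ is finite, which it is since $A$ has finite global dimension (here if $i > 0$ we do not even need to know anything delicate about $H_0$, only that its projective dimension is at most $\operatorname{gdim}A$, and the arithmetic still closes because the shift $B_0$ sits one slot over). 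Then by induction on $M_{\bullet}'$, the largest index $i$ with $H_i \neq 0$ (which is the same $i \geq 1$, assuming $i \geq 2$; the case $i = 1$ I would handle by noting $H_1 \subseteq Z_1/\image(M_2\to M_1)$ and running the same submodule-of-a-low-pdim-module estimate) satisfies $j(H_i) \leq (m-1) - i$, which is even stronger than the claimed bound, so we are done.

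The main obstacle is the bookkeeping in the case $i = 1$, where the truncation does not buy an extra slot and one must argue directly: $H_1$ is a subquotient built from $Z_1$ and $\image(M_2 \to M_1)$, and one needs $j(H_1) \leq m-1$. Here I would instead split off only the very bottom and argue that $Z_1$ has $\pdim \leq m - 1$ as above, that $\image(M_2 \to M_1) \subseteq Z_1$, hence $H_1 = Z_1 / \image(M_2 \to M_1)$; then apply the length-$(m-1)$ acyclicity statement to $0 \to M_m \to \cdots \to M_2 \to Z_1$ whose top nonvanishing homology is at slot $\geq 1$ — if it is at slot $1$, the inductive conclusion gives $j(H_1) \leq m - 2 \leq m - 1$; if this complex is exact in positive degrees, then $\image(M_2 \to M_1) = Z_1$ forces $H_1 = 0$, contradicting $i = 1$. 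The only real content beyond classical Peskine--Szpiro is the base case, and there the decisive input is that grade, being defined via $\Hom_A(-,A)$, behaves well on submodules of projectives exactly where the classical depth argument behaves well on quotients; Auslander's condition and Proposition \ref{prop-Bjork grade of Ext modules} are what guarantee that no pathology creeps in when we pass between a module and its iterated $\Ext$-duals in the splitting arguments.
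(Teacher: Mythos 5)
Your proposal has a genuine gap in the inductive step, precisely at the claim "$\pdim(Z_1) \leq m-1$." Chasing the two short exact sequences you write down only yields
\[
\pdim(Z_1) \;\leq\; \max\bigl(\pdim M_1,\ \pdim B_0 - 1\bigr) \;\leq\; \max\bigl(m-1,\ \pdim(H_0) - 2\bigr),
\]
and $\pdim(H_0)$ is \emph{not} bounded by anything like $m+1$: the hypothesis $\pdim(M_q)\le m-q$ places no constraint on $H_0$, so the arithmetic does not close merely because $\operatorname{gdim}A<\infty$. For a concrete failure, take $A = \mathbb{C}[[x_1,\dots,x_5]]$, $m=2$, $M_2=0$, $M_1=A^4$, $M_0=A$, and $d_1(a_1,\dots,a_4)=\sum a_ix_i$. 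All hypotheses hold, $H_2=0$, $H_1 = Z_1 = \Omega^1\bigl(A/(x_1,\dots,x_4)\bigr)\neq 0$, so $i=1$. Here $\pdim(Z_1)=2 > m-1 = 1$ (and $\pdim(H_0)=4$), so the truncated complex $0\to 0\to Z_1\to 0$ does not satisfy the inductive hypothesis. (The lemma's conclusion $j(H_1)\le 1$ is still true in this example, since $Z_1$ embeds in a free module and hence has grade $0$ --- the result is fine, but your route to it is blocked.)

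A related red flag: your argument never actually invokes Auslander's condition or Proposition \ref{prop-Bjork grade of Ext modules} in a load-bearing way --- it only uses finite global dimension plus the observation that nonzero submodules of free modules have nonzero duals. The lemma genuinely depends on the Auslander property, so a proof that doesn't use it should be suspect. The paper's proof sidesteps the need to bound $\pdim(\ker d_i)$ entirely. It only bounds $\pdim(\image d_{i+1})$ (which is controlled by the exact part above slot $i$), and then, from $0\to\ker d_i\to M_i\to Q\to 0$, uses the long exact sequence plus Auslander's condition to conclude the \emph{double}-dual vanishing $\Ext^t_A\bigl(\Ext^t_A(\ker d_i,A),A\bigr)=0$ for $t\ge m-i+1$, even though nothing useful can be said about $\Ext^t_A(\ker d_i,A)$ itself or about $\pdim(\ker d_i)$. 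Combining this with the sequence $0\to\image d_{i+1}\to\ker d_i\to H_i\to 0$ transfers the double-dual vanishing to $H_i$, and Proposition \ref{prop-Bjork grade of Ext modules} then forces $j(H_i)\le m-i$. Your base case $i=m$ is correct and agrees with the paper's handling of that case, but the inductive machinery in the body of the argument does not work; the secondary handling of $i=1$ inherits the same unestablished bound and additionally mis-states the location of the top nonvanishing homology of the truncated complex when $i=1$ (it sits at slot $0$, not slot $\ge 1$, so the inductive hypothesis is inapplicable there).
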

\begin{proof}

Label the differentials as $d_{q}: M_{q} \to M_{q-1}$ so that $H_{q} = \ker d_{q} / \image d_{q+1}$. We first assume $i \leq m -2 $. 

\emph{Step 1}: By exactness before $i$ we have the short exact sequences
\[
0 \to M_{m} \to M_{n-1} \to \image d_{m-1} \to  0 
\]
\[
0 \to \image d_{m-1} \to M_{m-2} \to \image d_{m-2} \to 0 \
\]
\[
\vdots
\]
\[
0 \to \image d_{i+2} \to M_{i+1} \to \image d_{i+1} \to 0. 
\]
Using the long exact sequence in $\Ext_{A}(-, A)$ and the fact $\pdim (M_{m}) \leq 0$, $\pdim (M_{m-1}) \leq 1$, the first short exact sequence implies $\pdim (\image d_{m-1}) \leq 1.$ Because $\pdim (M_{m-2}) \leq 2$, the same method applied to the second short exact sequence implies $\pdim (\image d_{m-2}) \leq 2$. Iterating, we deduce $\pdim (\image d_{i+1}) \leq m - (i+1)$.

\emph{Step 2}: From the inclusion $\ker d_{i} \hookrightarrow M_{i}$ we obtain a short exact sequence 
\[
0 \to \ker d_{i} \to M_{i} \to Q \to 0.
\]
Again consider the long exact sequence of $\Ext_{A}(-,A)$ modules. Since $\pdim (M_{i}) \leq m-i$, for all $t \geq m - i + 1$ we have $\Ext_{A}^{t}(\ker d_{i}, A) \simeq \Ext_{A}^{t+1}(Q, A)$. By Auslander's condition, we have a lower bound on grade: $j(\Ext_{A}^{t+1}(Q, A)) \geq t+1$. So 
\[
\Ext_{A}^{t}(\Ext_{A}^{t}(\ker d_{i}, A), A) \simeq \Ext_{A}^{t}(\Ext_{A}^{t+1}(Q, A), A) = 0  \text{ for all } t \geq m - i + 1.
\]

\emph{Step 3}: Now consider the short exact sequence 
\begin{equation} \label{eqn-step 3 ses}
0 \to \image d_{i+1} \to \ker d_{i} \to H_{i} \to 0.
\end{equation}
Using $\pdim (\image d_{i+1}) \leq m - (i+1)$ from Step 1 and inspecting the long exact sequence of $\Ext_{A}(-,A)$ modules reveals an isomorphism 
\[
\Ext_{A}^{t}(H_{i}, A) \simeq \Ext_{A}^{t}(\ker d_{i}, A) \text{ for all } t \geq m - (i+1) + 2 = m - i + 1
\]
By Step 2,
\[
\Ext_{A}^{t}(\Ext_{A}^{t}(H_{i}, A), A) = 0 \text{ for all } t \geq m - i + 1.
\]
If $j(H_{i}) \geq m - i + 1$ this yields
\[
\Ext_{A}^{j(H_{i})}(\Ext_{A}^{j(H_{i})}(H_{i}, A), A) = 0,
\]
which is impossible by Proposition \ref{prop-Bjork grade of Ext modules}. Therefore $j(H_{i}) \leq m - i.$

It remains to tackle the cases of $i =m, m-1.$ In both situations Step 2 goes through exactly as before. Now we argue similarly to Step 3, replacing the invocation of Step 1 with ad hoc arguments. If $i = m$, then $\ker d_m = H_m$. Then Step 2 implies $\Ext_A^t (\Ext_A^t (H_m, A), A) = 0$ for all $t \geq 1$, forcing $j(H_m) \leq 0$ as required. If $i = m-1$, then \eqref{eqn-step 3 ses} becomes $0 \to M_m \to \ker d_{m-1} \to H_{m-1} \to 0$. Because $\pdim M_m \leq 0$, we see that $\Ext_A^t(\ker d_{m-1}, A) \simeq \Ext_A^t(H_{m-1}, A)$ for all $t \geq 2$. Step 2 gives $\Ext_A^t(\Ext_A^t(H_{m-1}, A), A) = 0$ for all $t \geq 2$, implying that $j(H_{m-1}) \leq 1$ and completing the proof.
\end{proof}

\subsection{The commutative analogue}

Lemma \ref{lemma- noncommutative acyclity lemma Auslander} looks very similar to Peskine--Szpiro's Acyclicity Lemma. However, even in the commutative case the two results are somewhat skew. To see this, we quote more results from Appendix IV of \cite{Bjork}. But first recall that if $B$ is a commutative Noetherian ring and $M$ a left $B$-module, the dimension of $M$ is 
\begin{equation*}
    \dim(M) = \min \{ \text{height} (\mathfrak{p}) \mid \mathfrak{p} \in \Spec(B) \text{ is a minimal prime of } \ann_R(M) \}
\end{equation*}
where $\ann_B(M) = (0 :_B M)$ is the $B$-annihilator of $M$.

\begin{proposition} \label{prop- Bjork commutative auslander regular facts} (Bj\"{o}rk Proposition A:IV.3.4 \cite{Bjork}, Proposition 4.5.1 of \cite{ZeroLociI}) 
Let $A$ be a commutative, regular, Noetherian ring. Then:
\begin{enumerate}[label=(\alph*)]
\item $A$ is an Auslander regular ring;
\item If $M \neq 0$ is a finitely generated left $A$-module and if $\dim A_{\mathfrak{m}}$ is the same for every maximal ideal $\mathfrak{m} \subseteq A$, then
\[
j(M) + \dim(M) = \dim A_{\mathfrak{m}}.
\]
\end{enumerate}
\end{proposition}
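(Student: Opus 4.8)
The plan is to deduce both parts from the corresponding statements over regular \emph{local} rings by localizing, using that $\Ext$ of a finitely generated module commutes with localization. Concretely, for finitely generated $N$ over a commutative Noetherian ring $A$ one has $\Ext^i_A(N,A)_{\mathfrak m}\cong \Ext^i_{A_{\mathfrak m}}(N_{\mathfrak m},A_{\mathfrak m})$ for every maximal ideal $\mathfrak m$, and a finitely generated module vanishes iff all of its localizations at maximal ideals do; hence
\[
j(N)=\min_{\mathfrak m}\, j_{A_{\mathfrak m}}(N_{\mathfrak m}),
\]
a genuine minimum over maximal ideals (with $j(0)=+\infty$). The only truly local input I will need is the classical identity for a regular (more generally Cohen--Macaulay) local ring $(A',\mathfrak m')$ of dimension $d$ and a nonzero finitely generated module $M'$:
\[
j_{A'}(M')=\operatorname{grade}(\operatorname{ann}M',A')=\operatorname{height}(\operatorname{ann}M')=d-\dim M',
\]
which I would cite (e.g.\ from Bruns--Herzog): grade of a module equals grade of its annihilator ideal; over a Cohen--Macaulay local ring grade of a proper ideal equals its height; and a regular local ring is catenary and equidimensional, so $\operatorname{height}I+\dim A'/I=d$.

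For (a): Noetherianity is given, and finite global homological dimension is Serre's theorem — a Noetherian ring of finite Krull dimension is regular iff it has finite global dimension, with $\operatorname{gdim}A=\dim A=\sup_{\mathfrak m}\dim A_{\mathfrak m}$ — so the Krull dimension is taken finite, consistent with the paper's standing hypotheses. For Auslander's condition, fix a finitely generated $A$-module $M$ and a submodule $N\subseteq \Ext^k_A(M,A)$; by the localization formula it suffices to show $j_{A_{\mathfrak m}}(N_{\mathfrak m})\ge k$ for each maximal $\mathfrak m$, i.e.\ (dropping subscripts) that over a regular local ring $A'$ of dimension $d$, every submodule $N'$ of $\Ext^k_{A'}(M',A')$ satisfies $j_{A'}(N')\ge k$. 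By the local identity this is equivalent to $\dim N'\le d-k$, and since $N'\subseteq \Ext^k_{A'}(M',A')$ it suffices to bound $\dim \Ext^k_{A'}(M',A')\le d-k$. For that, observe that every prime $\mathfrak q\in\Supp \Ext^k_{A'}(M',A')$ has $\operatorname{height}\mathfrak q\ge k$: otherwise $A'_{\mathfrak q}$ is regular local of dimension, hence global dimension, $<k$, forcing $\Ext^k_{A'_{\mathfrak q}}(M'_{\mathfrak q},A'_{\mathfrak q})=0$; by catenarity and equidimensionality $\dim A'/\mathfrak q=d-\operatorname{height}\mathfrak q\le d-k$, and taking the supremum over such $\mathfrak q$ gives the bound.

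For (b): By the localization formula and the local identity, for every maximal $\mathfrak m$ we get $j_{A_{\mathfrak m}}(M_{\mathfrak m})=\dim A_{\mathfrak m}-\dim M_{\mathfrak m}=d-\dim M_{\mathfrak m}$, using that $\dim A_{\mathfrak m}=d$ is independent of $\mathfrak m$ by hypothesis. Since $\dim M=\sup_{\mathfrak m}\dim M_{\mathfrak m}$ — any chain of primes in $\Supp M$ lies below some maximal ideal, and conversely any such chain in a localization pulls back — this yields
\[
j(M)=\min_{\mathfrak m}\bigl(d-\dim M_{\mathfrak m}\bigr)=d-\sup_{\mathfrak m}\dim M_{\mathfrak m}=d-\dim M .
\]

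I do not anticipate a serious obstacle: the proposition repackages standard commutative algebra (indeed it is quoted here from \cite{Bjork} and \cite{ZeroLociI}). The two points demanding care are (i) invoking the local grade $=$ codimension identity over the rings $A_{\mathfrak m}$, which requires knowing these are genuinely regular local — this is precisely what "regular ring" means; and (ii) the bookkeeping in passing between $j(-)$, defined via the \emph{first} nonvanishing $\Ext$, and its behaviour under localization, which rests on the fact that a finitely generated module is zero iff it vanishes at every maximal ideal. If one prefers not to cite the local grade identity, it can be reproved in a couple of lines from a maximal $A'$-regular sequence inside $\operatorname{ann}M'$ together with the fact that over a Cohen--Macaulay local ring all maximal regular sequences in a fixed ideal have the same length, equal to its height.
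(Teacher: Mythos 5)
The paper does not actually prove Proposition~\ref{prop- Bjork commutative auslander regular facts}; it is quoted verbatim from Bj\"ork (A.IV, Prop.~3.4) and from \cite{ZeroLociI}, so there is no in-text argument to compare against. That said, your proof is correct and is essentially the standard one.

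A few remarks worth recording. Your reduction to the local case rests on two facts you state cleanly: compatibility of $\Ext$ of finite modules with localization over a Noetherian ring, and the consequent identity $j(N)=\min_{\mathfrak m}\, j_{A_{\mathfrak m}}(N_{\mathfrak m})$; both are unimpeachable. The local ``grade $=$ codimension'' identity $j_{A'}(M')=d-\dim M'$ over a regular (or just Cohen--Macaulay) local ring $A'$ of dimension $d$ is exactly the right input, and your observation that $\dim\Ext^k_{A'}(M',A')\le d-k$ --- because any $\mathfrak q\in\Supp\Ext^k$ must have height $\ge k$, since $A'_{\mathfrak q}$ has global dimension $\operatorname{ht}\mathfrak q$ --- is the key point that makes Auslander's condition come out. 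One small thing to say explicitly, which you gesture at but could state: a commutative regular Noetherian ring has finite global dimension \emph{iff} it has finite Krull dimension, and this is supplied by the paper's standing hypothesis on $A$ at the start of Section~2 (Nagata's examples show it is not automatic); without it (a) would be false as stated. In (b) the bookkeeping with $\min$ and $\sup$ over maximal ideals is correct; the equidimensionality hypothesis $\dim A_{\mathfrak m}\equiv d$ is exactly what lets the local constants line up, as you use. So: correct, and as far as one can tell the same route Bj\"ork and \cite{ZeroLociI} take --- localize, then invoke Rees's grade-of-annihilator theorem and the Cohen--Macaulay grade-equals-height identity.
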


First of all, Peskine--Szpiro only requires that $A$ be a commutative, local, Noetherian ring. We may use Proposition \ref{prop- Bjork commutative auslander regular facts} to find a commutative version of Lemma \ref{lemma- noncommutative acyclity lemma Auslander}, but this requires the extra assumption of regularity on $A$. Additionally, Peskine--Szpiro assumes $\depth M_{i} \geq i$. By Auslander--Buschbaum this gives the upper bound $\pdim(M_{i}) \leq \depth A - i$. So the hypotheses on projective dimension between the two lemmas are only compatible if we assume the index $m$ in Lemma \ref{lemma- noncommutative acyclity lemma Auslander} satisfies $m = \depth A$.

If we assume enough to resolve these discrepancies (commutative, local, regular, $\dim A_{\mathfrak{m}} = m$ the length of $M_{\bullet}$), the conclusions of the two lemmas differ thanks to: (1) the difference between dimension and depth; (2) the difference in the bounds each lemma gives. The Peskine--Szpiro Acyclicity Lemma concludes with ``the first nonzero homology module $H_{i}$ has positive depth''; by Proposition \ref{prop- Bjork commutative auslander regular facts}, the conclusion of Lemma \ref{lemma- noncommutative acyclity lemma Auslander} is ``the first nonzero homology module $H_{i}$ has dimension $\geq i$.'' On one hand, Peskine--Szpiro is stronger: positive depth implies positive dimension, but not vice versa. On the other hand, our noncommutative Peskine--Szpiro Acyclicity Lemma is more robust: the original Acyclicity Lemma cannot obtain as fine as a lower bound as ``$\dim H_{i} \geq i$'', instead it is limited to ``dimension is positive.''

\section{The case of relative $\mathscr{D}_{X}$-modules} 

Now we turn to the situation of relative $\mathscr{D}_{X}$-modules where $X$ is a complex manifold or smooth algebraic variety and $\mathscr{O}_{X}$ the algebraic structure sheaf. These rings turn out to be Auslander regular (Proposition \ref{prop-Bjork associated graded commutative}) and so the previous facts are salient. Being geometric, this setting comes with a visual enrichment of Lemma \ref{lemma- noncommutative acyclity lemma Auslander}. We first work with algebraic differential operators $\mathscr{D}_{X}$ and later discuss how to extend the results to the local analytic setting. So assume everything is algebraic until subsection 3.3.

\begin{define} \label{def- relative D-modules} 
For $R$ a finitely generated $\mathbb{C}$-algebra that is a regular integral domain, denote
\[
\mathscr{A}_{R} = \mathscr{D}_{X} \otimes_{\mathbb{C}} R.
\]
A module over $\mathscr{A}_{R}$ is a \emph{relative} $\mathscr{A}_{R}$-module; sometimes we call these \emph{relative} $\mathscr{D}_{X}$-module.
\end{define}

The famous order filtration $F_{\bullet}$ on $\mathscr{D}_{X}$ induces a filtration 
\[
F_{i} \mathscr{A}_{R} = F_{i} \mathscr{D}_{X} \otimes_{\mathbb{C}} R
\]
on $\mathscr{A}_{R}$ which we call the \emph{relative order filtration}. Let $\gr^{\text{rel}}\mathscr{A}_{R} = \gr \mathscr{D}_{X} \otimes_{\mathbb{C}} R$ be the associated graded ring. Note that this is a positive filtration (see next subsection), $\gr^{\text{rel}}\mathscr{A}_{R}$ is a commutative, regular, Noetherian ring, and $\Spec \gr^{\text{rel}} \mathscr{A}_{R} = T^{\star} X \times \Spec R$ where $T^{\star}X$ is the cotangent bundle.

For any finitely generated left $\mathscr{A}_{R}$-module, good filtrations (see next subsection) exist with respect to the relative order filtration. Before proceeding, we detour with some classical facts about filtrations and associated graded objects. We can use the results about positively filtered rings here since the relative order filtration is a particularly nice positive filtration. 

\subsection{Positively filtered rings} 
Suppose $A$ is \emph{positively filtered}, that is, has a filtration of subgroups $F_{j} A \subseteq F_{j+1}$ indexed by the integers such that: (1) the filtration is compatible with multiplication in the natural way; (2) $F_{-1} A = 0$. Let $\gr^{F} A = \oplus F_{j+1} A / F_{j} A$ be the associated graded ring. Now let $M$ be a finitely generated $A$-module with a filtration $\Gamma$. We say $\Gamma$ is a \emph{good filtration} when $\Gamma$ is bounded below and $\gr M = \gr^\Gamma M = \oplus \Gamma_v M / \Gamma_{v-1}$ is a finitely generated $\gr^F$-module (Proposition A:III.1.29 \cite{Bjork}).
\begin{define}
Suppose that $A$ is a positively filtered ring such that $\gr^F A$ is commutative. Let $M$ be a finitely generated left $A$-module equipped with a good filtration. Denote the associated graded $\gr^{F}$-module $\gr M$. Then the \emph{characteristic ideal} of $M$ is
\[
J^{F}(M) = \text{rad} ( \ann_{\gr^{F}A} \gr M)
\]
and this does not depend on the choice of good filtration of $M$, cf. A:III.3.20 \cite{Bjork}. The \emph{characteristic variety} of $M$ is 
\[
\Ch^{F}(M) = \Var( J^{F}(M))
\]
which also does not depend on the choice of good filtration.
\end{define}

Assuming the existence of a positive filtration is useful because often $\gr^{F} A$ is a commutative, regular, Noetherian ring. In this case

\begin{proposition} \label{prop-Bjork associated graded commutative} (Bj\"{o}rk A:IV.5.1, A:IV.4.15 \cite{Bjork})
Suppose that $A$ is a positively filtered ring such that $\gr^{F} A$ is a commutative, regular, Noetherian ring. Then:
\begin{enumerate}[label=(\alph*)]
    \item $A$ is Auslander regular;
    \item if $M \neq 0$ is a finitely generated left $A$ module, $j(M) = j(\gr M)$. 
\end{enumerate}
\end{proposition}

\subsection{Our lemma for algebraic, relative $\mathscr{D}_{X}$-modules}

Using the characteristic variety arising from the relative order filtration, we may formulate parts of Lemma \ref{lemma- noncommutative acyclity lemma Auslander} in the geometric picture of a module's $\mathscr{O}_{X}$-support. Here we work with algebraic $\mathscr{D}_{X}$, recalling that from Proposition \ref{prop-Bjork associated graded commutative} the relative order filtration reveals that $\mathscr{A}_{R}$ is Auslander regular.

\begin{lemma} \label{lemma- algebraic noncommutative lemma}
Let $\dim X = n$ and let
\[
M_{\bullet} := 0 \to M_{n} \to M_{n-1} \to \cdots \to M_{1} \to M_{0}
\]
a complex of finitely generated left $\mathscr{A}_{R}$-modules such that $\pdim(M_{q}) \leq n-q$. Denote by $H_{q}$ the homology at slot $q$. If, for each $i>0$, the $\mathscr{O}_{X}$-support of each homology module $H_{i}$ has dimension at most $i - 1$, then $M_{\bullet}$ is acyclic. In particular, the augmented complex resolves $M_{0} / \im (M_{1} \to M_{0})$.
\end{lemma}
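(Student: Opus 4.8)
The plan is to reduce Lemma \ref{lemma- algebraic noncommutative lemma} to Lemma \ref{lemma- noncommutative acyclity lemma Auslander}. The ring $\mathscr{A}_{R}$ is Auslander regular by Proposition \ref{prop-Bjork associated graded commutative} (the relative order filtration is a positive filtration with commutative, regular, Noetherian associated graded), so the earlier lemma applies verbatim to the complex $M_{\bullet}$ of length $n = \dim X$ with the hypothesis $\pdim(M_{q}) \leq n - q$. The only thing to do is translate the hypothesis and conclusion between grade and $\mathscr{O}_{X}$-support dimension, and then argue by induction that \emph{all} the homology vanishes, not just the top one.

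First I would record the dictionary between grade and dimension. For a nonzero finitely generated $\mathscr{A}_{R}$-module $N$, choosing a good filtration gives, by Proposition \ref{prop-Bjork associated graded commutative}(b), $j(N) = j(\gr N)$, and $\gr N$ is a finitely generated module over $\gr^{\mathrm{rel}} \mathscr{A}_{R}$, whose spectrum is $T^{\star}X \times \Spec R$, a commutative regular Noetherian ring. Applying Proposition \ref{prop- Bjork commutative auslander regular facts}(b) to $\gr N$ yields $j(N) + \dim(\Ch^{\mathrm{rel}} N) = \dim X + \dim(X) + \dim(\Spec R)$ at each point — more precisely $j(N) = \dim(T^{\star}X \times \Spec R) - \dim \Ch^{\mathrm{rel}}(N)$. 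Since the projection $\Ch^{\mathrm{rel}}(N) \to X$ surjects onto the $\mathscr{O}_{X}$-support of $N$ with fibres that are (at least) the zero section, the fibre dimension is at least $n$; combined with $\dim \Spec R$, one gets the inequality that for $N \neq 0$, the condition $\dim_{\mathscr{O}_{X}} \Supp N \leq i - 1$ forces $j(N) \geq \dim X + \dim\Spec R - (i-1) - (\dim X + \dim\Spec R - n)$... cleaner: I would simply phrase it as $j(N) > n - i$ whenever $\dim_{\mathscr{O}_{X}}\Supp(N) \leq i - 1$, deriving this bound directly from the fact that the characteristic variety of $N$ fibres over $\Supp_{\mathscr{O}_{X}}(N)$ with each nonempty fibre of dimension $\geq n$ over $X$ (the conic/zero-section contribution) and $\geq 0$ over $\Spec R$, so $\dim\Ch^{\mathrm{rel}}(N) \geq n + \dim_{\mathscr{O}_{X}}\Supp(N)$, hence a lower bound on $n - j(N)$... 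I will fix the precise constant in the write-up, but the upshot is the contrapositive: $j(N) \leq n - i \implies \dim_{\mathscr{O}_{X}}\Supp(N) \geq i$.

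With this dictionary, I argue by descending induction on the top nonvanishing homological degree. Suppose for contradiction that $M_{\bullet}$ is not acyclic, and let $i > 0$ be the largest index with $H_{i} \neq 0$. Lemma \ref{lemma- noncommutative acyclity lemma Auslander} gives $j(H_{i}) \leq n - i$, hence by the dictionary $\dim_{\mathscr{O}_{X}}\Supp(H_{i}) \geq i$, contradicting the hypothesis that this support has dimension at most $i - 1$. Therefore $H_{i} = 0$ for all $i > 0$, i.e. $M_{\bullet}$ is acyclic; the augmented complex $0 \to M_{n} \to \cdots \to M_{0} \to M_{0}/\im(M_{1} \to M_{0}) \to 0$ is then a resolution. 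The main obstacle is purely bookkeeping: getting the exact relation between $j(N)$, $\dim \Ch^{\mathrm{rel}}(N)$, and $\dim_{\mathscr{O}_{X}}\Supp(N)$ right, in particular handling the $\Spec R$ factor and the conic zero-section fibre correctly so that the constant lines up with the index shift $\pdim(M_{q}) \leq n - q$ in the hypotheses — everything else follows formally from the two cited propositions and the already-proved Lemma \ref{lemma- noncommutative acyclity lemma Auslander}.
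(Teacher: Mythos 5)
Your overall plan coincides with the paper's: invoke Lemma \ref{lemma- noncommutative acyclity lemma Auslander} to bound the grade of the top nonvanishing homology, convert grade to $\dim \Ch^{\rel}$ via Propositions \ref{prop-Bjork associated graded commutative} and \ref{prop- Bjork commutative auslander regular facts}, and derive a contradiction with the support hypothesis. The final dictionary you state, $j(N) \leq n-i \Rightarrow \dim_{\mathscr{O}_X}\Supp(N) \geq i$, is also correct.

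However, the justification you sketch for that dictionary is wrong in a way that matters. You argue that the fibres of $\Ch^{\rel}(N) \to X$ over the support ``are (at least) the zero section,'' so have dimension $\geq n$, and conclude $\dim\Ch^{\rel}(N) \geq n + \dim_{\mathscr{O}_X}\Supp(N)$. This is false: for $N = \mathscr{O}_X$ (with $R = \mathbb{C}$), $\Ch(N)$ \emph{is} the zero section and the fibres are points, not $n$-dimensional. Worse, even if such a lower bound held, it points in the wrong direction. You are given $j(H_i) \leq n-i$, i.e. a \emph{lower} bound $\dim\Ch^{\rel}(H_i) = 2n + \dim R - j(H_i) \geq n + i + \dim R$, and you want to deduce a \emph{lower} bound on $\dim_{\mathscr{O}_X}\Supp(H_i)$. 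For that you need an \emph{upper} bound on $\dim\Ch^{\rel}$ in terms of the support: namely
\[
\Ch^{\rel}(H_i) \ \subseteq\ \pi^{-1}\bigl(\Supp_{\mathscr{O}_X} H_i\bigr) \times \Spec R,
\]
whence $\dim\Ch^{\rel}(H_i) \leq n + \dim_{\mathscr{O}_X}\Supp(H_i) + \dim R$, using that each fibre of $\pi$ has dimension exactly $n$ (it is the full cotangent space) and that the $\Spec R$ factor contributes at most $\dim R$. Combining the two bounds gives $\dim_{\mathscr{O}_X}\Supp(H_i) \geq i$, contradicting the hypothesis. So the proof goes through with the same skeleton you proposed, but the geometric inequality you wrote down must be replaced by the opposite containment; the paper's actual argument uses precisely this upper bound.
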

\begin{proof}
Let $i > 0$ be the largest index such that $H_{i} \neq 0$. By Lemma \ref{lemma- noncommutative acyclity lemma Auslander}, $j(H_{i}) \leq n- i < n - i + 1.$ By Proposition \ref{prop-Bjork associated graded commutative}, $j(\gr H_{i}) < n - i + 1$ as well. Since $\gr^{\rel} \mathscr{A}_{R}$ is a finitely generated $\mathbb{C}$-algebra, all its maximal ideals have the same height $2n + \dim R$. Proposition \ref{prop- Bjork commutative auslander regular facts} applies and we discover $\dim \Ch^{\rel}(H_{i}) > n + i - 1 + \dim R.$

On the other hand, since $T^{\star}X \times \Spec R$ is conical in the $\gr(\partial)$ direction, the canonical fibration $\pi: T^{\star}X \to X$ extends to a fibration $(\pi,\text{id}): T^{\star}X \times \Spec R \to X \times \Spec R$ that is the identity on the second factor. In particular, this lets us approximate the support of $\gr^{\text{rel}} N$ for any finitely generated left $\mathscr{A}_{R}$-module $N$:
\[
\Ch^{\rel} (N) \subseteq (\pi, \text{id})^{-1}(\text{Supp}_{\mathscr{O}_{X} \otimes_{\mathbb{C}} R} N) \subseteq \pi^{-1} (\text{Supp}_{\mathscr{O}_{X}} N) \times \Spec R.
\]
In the case of $H_{i} = N$, we know that $H_{i}$ does not vanish only $\Supp_{\mathscr{O}_{X}} H_{i} \subseteq X$, and we also know, by hypothesis, this support has dimension at most $i - 1$. So
\[
\Ch^{\rel}(H_{i}) \subseteq \pi^{-1}(\Supp_{\mathscr{O}_{X}} H_{i}) \times \Spec R,
\]
and since the fibers of $\pi^{-1}(\mathfrak{x})$ are each conic Lagrangians of dimension $n$, we see that $\dim \Ch^{\rel} (H_{i}) \leq n + i - 1 + \dim R$. This conflicts with the lower bound from the first paragraph. 

Thus for all $i > 0$ the homology modules $H_{i}$ must vanish.
\end{proof}

\begin{remark} \label{rmk- algebraic vanish outside of points simplification}
Note that this result is still interesting if we simplify the assumption about $\mathscr{O}_{X}$-support to: each homology module $H_{i}$ vanishes outside a discrete set of $X$. Ideally this would be the case in an inductive set-up.
\end{remark}

\subsection{Our lemma for analytic, relative $\mathscr{D}_{X}$-modules}
Now let $X$ be a smooth analytic space or $\mathbb{C}$-scheme of dimension $n$, $\mathscr{O}_{X}$ the analytic structure sheaf, and $\mathscr{D}_{X}$ the sheaf of $\mathbb{C}$-linear differential operators over $\mathscr{O}_{X}$. Let $\mathscr{A}_{R}$ be defined as before. A similar result to Lemma \ref{lemma- noncommutative acyclity lemma Auslander} holds in this setting, provided we work at $\mathfrak{x} \in X$ and so with complex of finitely generated $\mathscr{A}_{R,\mathfrak{x}}$-modules.

We still have the same relative order filtration on $\mathscr{A}_{R,\mathfrak{x}}$. The main issue is the assumption in part (b) of Proposition \ref{prop- Bjork commutative auslander regular facts} may not hold. In fact, there are maximal ideals of different heights in $\mathscr{O}_{X, \mathfrak{x}}[s]$, which of course doesn't happen in the algebraic setting. We may skirt this issue by using partial analytification and the theory of graded modules and graded maximal ideals. Using the relative order filtration (and maximal graded ideals of the associated graded object) lets us use these two notions in tandem. The methodology is outlined in good detail in section 3.6 of \cite{ZeroLociI}, though we (and they) note its origin comes from ideas of Maisonobe in \cite{Maisonobe}.

What we need is the following crystallization of their cumulative labor: if $M$ is a finite $\mathscr{A}_{R,\mathfrak{x}}$-module then
\begin{equation} \label{eqn - analytic grade plus dim fml}
    j( \gr M) + \dim (\gr M) = \dim (\gr^{\rel} A_{R,\mathfrak{x}}) = 2n + \dim R.
\end{equation}

Using \eqref{eqn - analytic grade plus dim fml} to buttress the first paragraph of the proof of Lemma \ref{lemma- noncommutative acyclity lemma Auslander}, the same argument gives:

\begin{lemma} \label{lemma-noncommutative acyclicity analytic}
Let $\dim X = n$ and let 
\[
M_{\bullet} := 0 \to M_{n} \to M_{n-1} \to \cdots \to M_{1} \to M_{0}
\]
a complex of finitely generated left $\mathscr{A}_{R, \mathfrak{x}}$-modules such that $\pdim(M_{q}) \leq n-q$. Denote by $H_{q}$ the homology at slot $q$. If, for each $i>0$, the homology module $H_{i}$ vanishes at all points $\mathfrak{y}$ near $\mathfrak{x}$ outside a set of dimension at most $i-1$, then $M_{\bullet}$ is acyclic. In particular, the augmented complex resolves $M_{0} / \im (M_{1} \to M_{0})$.
\end{lemma}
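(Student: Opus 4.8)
The plan is to mirror, essentially verbatim, the proof of Lemma \ref{lemma- algebraic noncommutative lemma}, substituting the analytic grade--dimension formula \eqref{eqn - analytic grade plus dim fml} for the role played earlier by Proposition \ref{prop- Bjork commutative auslander regular facts}(b), and replacing the global $\mathscr{O}_X$-support hypothesis by its stalk-level analogue near $\mathfrak{x}$. First I would let $i>0$ be the largest index with $H_i\neq 0$; if no such $i$ exists we are done. Apply Lemma \ref{lemma- noncommutative acyclity lemma Auslander} to the complex of $\mathscr{A}_{R,\mathfrak{x}}$-modules (which is legitimate since $\mathscr{A}_{R,\mathfrak{x}}$ is Auslander regular, its relative order filtration having commutative regular Noetherian associated graded) to get $j(H_i)\le n-i$. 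By Proposition \ref{prop-Bjork associated graded commutative}(b), $j(\gr H_i)=j(H_i)\le n-i < n-i+1$, and then \eqref{eqn - analytic grade plus dim fml} converts this into the lower bound $\dim\Ch^{\rel}(H_i)=\dim(\gr H_i) > (2n+\dim R)-(n-i+1) = n+i-1+\dim R$.

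The second half is the geometric upper bound on $\dim\Ch^{\rel}(H_i)$, and here is where the analytic setting requires slightly more care than the algebraic one: one still has the conic fibration $(\pi,\mathrm{id}): T^\star X\times\Spec R \to X\times\Spec R$, and conicity in the $\gr(\partial)$ direction still gives the containment $\Ch^{\rel}(H_i)\subseteq \pi^{-1}(\Supp_{\mathscr{O}_X}H_i)\times\Spec R$ (interpreted on a small neighborhood of $\mathfrak{x}$). The hypothesis now says that, near $\mathfrak{x}$, the locus where $H_i$ is supported has dimension at most $i-1$; since each fiber $\pi^{-1}(\mathfrak{y})$ is a conic Lagrangian of dimension $n$, this forces $\dim\Ch^{\rel}(H_i)\le n+(i-1)+\dim R$. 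This contradicts the lower bound of the previous paragraph, so $H_i=0$, and inducting downward (or simply re-running the argument) kills all positive homology. The final sentence about the augmented complex resolving $M_0/\im(M_1\to M_0)$ is then immediate.

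I expect the main obstacle to be purely bookkeeping: making precise what "vanishes at all points $\mathfrak{y}$ near $\mathfrak{x}$ outside a set of dimension at most $i-1$" means for the stalk module $H_i$ over $\mathscr{A}_{R,\mathfrak{x}}$, and checking that the dimension estimate for $\Ch^{\rel}(H_i)$ is insensitive to shrinking to a small neighborhood of $\mathfrak{x}$ — i.e., that the partial analytification / graded-maximal-ideal machinery from section 3.6 of \cite{ZeroLociI} genuinely licenses the equidimensionality needed to apply \eqref{eqn - analytic grade plus dim fml}. Once that dictionary is in place, no new idea beyond the algebraic proof is required; the conic structure of the relative cotangent bundle and the Lagrangian fibers do all the geometric work, exactly as before.
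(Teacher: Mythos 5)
Your proof is correct and follows exactly the route the paper intends: rerun the proof of Lemma \ref{lemma- algebraic noncommutative lemma} (which begins by invoking Lemma \ref{lemma- noncommutative acyclity lemma Auslander}), replacing the use of Proposition \ref{prop- Bjork commutative auslander regular facts}(b) in the first paragraph with the analytic grade--dimension formula \eqref{eqn - analytic grade plus dim fml}, and reading the support hypothesis on the level of stalks near $\mathfrak{x}$. The paper gives essentially this one-line reduction with no further detail, so your write-out of the two dimension estimates ($\dim\Ch^{\rel}(H_i) > n+i-1+\dim R$ from the grade bound, $\dim\Ch^{\rel}(H_i)\le n+i-1+\dim R$ from the conic fibration and the support hypothesis) matches the intended argument.
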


\begin{remark} \label{rmk- analaytic vanish outside of points simplification}
Again: this is still of interest under the simplifying assumption: each homology module $H_{i}$ vanishes at all points $\mathfrak{y} \neq \mathfrak{x}$ near $\mathfrak{x}$.
\end{remark}

\subsection{Use cases, actual and potential}

Here we demonstrate an application of this noncommutative Peskine--Szpiro Acyclicity Lemma in the realm of Bernstein--Sato polynomials. A crash course is required.

Stay in the setting of the preceding subsection. So $\mathscr{O}_{X}$ is analytic and $\dim X = n$. Take $f$ to be a reduced global section (reduced is not necessary but is simpler); $s$ a dummy variable. By formally taking derivatives of $f^{s}$ using the chain rule, we can regard $\mathscr{O}_{X}[s, 1/f] \otimes f^{s}$ as a $\mathscr{D}_{X}[s]$-module. Do so and let $\mathscr{D}_{X}[s] f^{s}$ be the cyclic submodule generated by the symbol $f^{s}$. Being cyclic, we can define
\[
M_{f} = \frac{\mathscr{D}_{X}[s]}{\ann_{\mathscr{D}_{X}[s]} f^{s}} \simeq \mathscr{D}_{X}[s] f^{s}.
\]
The famous \emph{Bernstein--Sato polynomial} of $f$ at $\mathfrak{x}$ is the monic generator of 
\[
\ann_{\mathbb{C}[s]} \left( \frac{\mathscr{D}_{X,\mathfrak{x}}[s] f^{s}}{\mathscr{D}_{X,\mathfrak{x}}[s] f^{s+1}} \right)
\]
and is known to be nonzero.

Let $\Der_{X}(-\log f) = \{ \delta \in \Der_{X} \mid \delta \bullet f \in \mathscr{O}_{X} \cdot f \}$ be the \emph{logarithmic derivations} of $f$; this is an $\mathscr{O}_{X}$-module closed under brackets. Each logarithmic derivation $\delta$ corresponds to annihilating element of $f^{s}$ by 
\begin{equation}  \label{eqn - map giving theta}
\Der_{X}(-\log f) \ni \delta \mapsto \delta - s \frac{\delta \bullet f}{f} \in \ann_{\mathscr{D}_{X}[s]}f^{s}.
\end{equation}
In practice, these specific annihilators are the only easily producible ones. Let $\theta_{f}(s)$ be $\mathscr{O}_{X}[s]$-submodule generated by the image of the map \eqref{eqn - map giving theta}. Consider two new modules defined by the following natural short exact sequence:
\begin{equation} \label{eqn - the log ses}
\begin{tikzcd}
0 \rar
    & \frac{\ann_{\mathscr{D}_{X}[s]} f^{s}}{\mathscr{D}_{X}[s] \cdot \theta_{f}(s)} \rar \dar[equals]
        & \frac{\mathscr{D}_{X}[s]}{\mathscr{D}_{X}[s] \cdot \theta_{f}(s)} \rar \dar[equals]
            & \frac{\mathscr{D}_{X}[s]}{\ann_{\mathscr{D}_{X}[s]} f^{s}} \rar \dar[equals]
                & 0 \\
0 \rar 
    & K_{f}^{\log} \rar
        & M_{f}^{\log} \rar
            & M_{f} \rar
                & 0.
\end{tikzcd}
\end{equation}

As $M_{f}^{\log}$ has an explicit description, it is easier to study; in desirable settings $M_{f}^{\log} \simeq M_{f}$, making the Bernstein--Sato polynomial (and $M_{f}$) much easier to understand and/or compute. We now define a natural candidate to resolve $M_{f}^{\log}$, following the treatment of Calder{\'o}n Moreno and Narv{\'a}ez Macarro in \cite{MorenoMacarroLogarithmic, LNMSpencer}; see also the appendix of Narv{\'a}ez Macarro's \cite{MAcarroDuality}.

\begin{define} \label{def - log spencer complex} (see subsection 1.1.8 of \cite{MorenoMacarroLogarithmic}, A.18 of \cite{MAcarroDuality})
The \emph{Spencer complex} (Cartan-Eilenburg-Chevalley-Rinehart-Spencer complex) $\Sp_f^\bullet$ associated to $\theta_{f}(s)$ is a complex of $\mathscr{D}_{X}[s]$-modules defined as follows. The objects are left $\mathscr{D}_{X}[s]$-modules (the action comes from acting on the left of $\otimes$)
\[
\Sp_f^{-r} = \mathscr{D}_{X}[s] \otimes_{\mathscr{O}_{X}[s]} \bigwedge^{r} \theta_{f}(s)
\]
and the $\mathscr{D}_{X}[s]$-differential $\epsilon^{-r}: \Sp_f^{-r} \to \Sp_f^{-(r+1)}$ is given by 
\[
\epsilon^{-r} (P \otimes \lambda_{1} \wedge \cdots \wedge \lambda_{r}) = \sum_{i} (-1)^{r-1} (P \lambda_{i}) \otimes \widehat{\lambda_{i}} + \sum_{1 \leq i \leq j \leq r} (-1)^{i +j} P \otimes [\lambda_{i}, \lambda_{j}] \wedge \widehat{\lambda_{i, j}},
\]
under the convention $\widehat{\lambda_{i}}$ is the initial ordered wedge product excluding $\lambda_{i}$ and $\widehat{\lambda_{i,j}}$ is the initial ordered wedge product excluding both $\lambda_{i}$ and $\lambda_{j}$. For $r=1$ we have
\[
\epsilon^{-1} (P \otimes \lambda) = P \lambda \in \Sp_f^{0} = \mathscr{D}_{X}[s],
\]
making the augmented complex
\[
\Sp_f^\bullet \to M_{f}^{\log}.
\]
\end{define}

We want to reprove the ``hard part'' (i.e. the part not using ``differential linear type'') of Calder{\'o}n Moreno and Narv{\'a}ez Macarro's Theorem 1.6.4 of \cite{MorenoMacarroLogarithmic} using our noncommutative Peskine--Szpiro Acyclicity Lemma. This proposition, in our language, essentially says that $\Sp_f^\bullet$ resolves $M_{f}^{\log}$ under certain hypothesis on the logarithmic derivations of $f$. It is significant for many reasons, one being that, to our knowledge, it is the only systematic way to find a such a resolution.

\begin{define} \label{def - hypotheses on log derivations} \text{\normalfont (Common working hypotheses)}
We say $f$ is \emph{free at} $\mathfrak{x}$ if $\Der_{X,\mathfrak{x}}(-\log f)$ is a free $\mathscr{O}_{X,\mathfrak{x}}$-module; $f$ is simply $\emph{free}$ if it is so at all $\mathfrak{x} \in X$.  

We may stratify $X$ using the logarithmic derivations as follows. Two points $\mathfrak{x}$ and $\mathfrak{y}$ belong to the same class equivalence class if there is some open $U \ni \mathfrak{x}, \mathfrak{y}$, a $\delta \in \Der_{U}(-\log f)$, that is (i) nowhere vanishing on $U$ and (ii) whose integral curve passes through both $\mathfrak{x}$ and $\mathfrak{y}.$ The transitive closure of this relation gives the \emph{logarithmic stratification} of $X$. We say $f$ is \emph{Saito-holonomic} when this stratification is locally finite. 
\end{define}

\begin{remark} \label{rmk- details of working hypotheses} \text{ }
\begin{enumerate}[label=(\alph*)]
    \item Freeness and Saito-holonomicity originated in \cite{SaitoLogarithmicForms}. We recommend section 2 of \cite{uli} (especially Remark 2.6) and Lemma 3.2, 7.3 of \cite{FormalStructure} for the technical details underlying what follows.
    \item Freeness and Saito-holonomicity are independent of the choice of defining equation of $f$. So is $\Der_{X,\mathfrak{x}}(-\log f)$.
    \item The Saito-holonomic property allows an inductive argument. It is always true that: if $\mathfrak{x}$ belongs to a positive dimensional stratum $\sigma$, then there is a local analytic isomorphism between the pairs $(X, \text{div}(f))$ and $\mathbb{C}^{\dim \sigma} \times (X^{\prime}, \text{div}(f^{\prime}))$ where $\dim X^{\prime} = n - \dim \sigma$. Under the Saito-holonomic assumption, one can induce on the dimension of $X$, use the inductive hypothesis and the above to assume $\mathfrak{x}$ belongs to a zero dimensional stratum, and use Saito-holonomicity to assume that a desired property holds at all $\mathfrak{y}$ near $\mathfrak{x}$. (See Remark 2.6 of \cite{uli}.)
    \item To make the above item a bit more explicit, we summarize Lemma 3.2 and Lemma 7.3 of \cite{FormalStructure}. When $\mathfrak{x}$ belongs to a positive dimensional stratum $\sigma$ then there is a local analytic isomorphism $(X, \text{div}(f), \mathfrak{x}) \simeq (\mathbb{C}^{\dim \sigma}, \mathbb{C}^{\dim \sigma},0) \times (X^{\prime}, \text{div}(f^{\prime}), \mathfrak{x}^{\prime})$. Let $\pr_i$ be the projection of $X$ onto the $i^{\text{th}}$ factor using the above product structure. Then we have
    \[
    \Der_{X,\mathfrak{x}}(-\log f) = \pr_1^\star \Der_{\mathbb{C}^{\text{dim} \sigma}, 0} \oplus \pr_2^\star \Der_{X, \mathfrak{x}^\prime}(-\log f^\prime).
    \]
    In particular $\Der_{X, \mathfrak{x}}(\log f) = \Der_{X, \mathfrak{x}}(\log \pr_2^\star f^\prime)$. Moreover elements of $\pr_1^\star \Der_{\mathbb{C}^{\text{dim} \sigma}, 0}$ act on $\pr_2^\star f^\prime$ as multiplication by $0$; the commutator of an element of $\pr_1^\star \Der_{\mathbb{C}^{\text{dim} \sigma}, 0}$ with an element of $\pr_2^\star \Der_{X, \mathfrak{x}^\prime}(-\log f^\prime)$ vanishes. 
    \item Stay in the setting and notation of (d). Writing $(\mathbb{C}^{\dim \sigma}, \mathbb{C}^{\dim \sigma},0)$ as $(\mathbb{C}, \mathbb{C}, 0) \times (\mathbb{C}^{\dim \sigma -1}, \mathbb{C}^{\dim \sigma -1}, 0)$ we see there is a germ $h \in \mathscr{O}_{Y, y}$ such that  
    \[
    \Der_{X, \mathfrak{x}} (-\log \pr_2^\star h) = \Der_{X,\mathfrak{x}}(-\log f) = \pr_1^\star \Der_{\mathbb{C}, 0} \oplus \pr_2^\star \Der_{Y,\mathfrak{y}}(-\log h)
    \]
    where $(Y, \text{div}(g), \mathfrak{y}) \simeq (\mathbb{C}^{\dim \sigma -1}, \mathbb{C}^{\dim \sigma -1}, 0) \times (X^\prime, \text{div}(f^\prime), \mathfrak{x}^\prime).$ This simplification of (d) is helpful for induction.
\end{enumerate}
\end{remark}

Now we reprove the ``hard part'' of Theorem 1.6.4 from \cite{MorenoMacarroLogarithmic} by a logically distinct argument--one that avoids appeal to associated graded objects and regular sequences. (Note that for free divisors, Saito-holonomicity is equivalent to being Koszul free.) First a lemma.

\begin{lemma} \label{lem-externalTensor}
    Suppose that $X = \mathbb{C} \times Y$ and let $\pr_2: X \to Y$ be the projection onto the second factor. Fix $h \in \mathscr{O}_{Y}$. If $\Sp_h^\bullet$ is acylic, then $\Sp_{\pr_2^\star h}^\bullet$ is acylic.
\end{lemma}

\begin{proof}
    Let $\pr_1 : X \to \mathbb{C}$ be the projection onto the first factor. Recall that for a (left) $\mathscr{D}_{\mathbb{C}}$-module $M$ and a (left) $\mathscr{D}_Y$-module $N$, the external tensor product is the $\mathscr{D}_{X}$-module
    \begin{equation*}
        M \boxtimes N = \mathscr{D}_X \otimes_{\pr_1^\star \mathscr{D}_{\mathbb{C}} \otimes_\mathbb{C} \pr_2^\star \mathscr{D}_Y} (\pr_1^\star M \otimes_\mathbb{C} \pr_2^\star N).
    \end{equation*}
    Moreover, $\boxtimes$ is exact in both arguments (pg 38-39 \cite{HTT}) and so extends to a map between bounded derived categories $D^b(\mathscr{D}_{\mathbb{C}}) \times D^b(\mathscr{D}_Y) \xrightarrow[]{\boxtimes} D^b(\mathscr{D}_X).$ Abusing notation, for a (left) $\mathscr{D}_{\mathbb{C}}[s]$-module $A$ and a (left) $\mathscr{D}_{Y}[s]$-module $B$ we denote the $\mathscr{D}_X[s]$-module
    \begin{equation*}
        A \boxtimes B = \mathscr{D}_X[s] \otimes_{\pr_1^\star \mathscr{D}_{\mathbb{C}}[s] \otimes_\mathbb{C} \pr_2^\star \mathscr{D}_Y[s]} (\pr_1^\star A \otimes_\mathbb{C} \pr_2^\star B).
    \end{equation*}
    Since polynomial ring extensions are flat, this external tensor also extends to a map between bounded derived categories $D^b(\mathscr{D}_{\mathbb{C}}[s]) \times D^b(\mathscr{D}_Y[s]) \xrightarrow[]{\boxtimes} D^b(\mathscr{D}_X[s]).$ 

    We will construct an acyclic complex of $\mathscr{D}_{\mathbb{C}}[s]$-modules $G^\bullet$ such that $\Sp_{\pr_2^\star h}^\bullet \simeq G^\bullet \boxtimes \Sp_h^\bullet$. Then by the above, in the bounded derived category $D^b(\mathscr{D}_X[s])$ the augmented complex $\Sp_{\pr_2^\star h}^\bullet \to M_{\pr_2^\star h}^{\log}$ will be represented by the external tensor of a single $\mathscr{D}_{\mathbb{C}}[s]$-module with the single $\mathscr{D}_{Y}[s]$-module $M_{h}^{\log}$. Hence $\Sp_{\pr_2^\star h}^\bullet$ is acylic and the proof will be complete.
    
    Let $G^\bullet = [0 \to G^{-1} \to G^0]$ be the complex of $\mathscr{D}_{\mathbb{C}}[s]$-modules given by $G^{-1} =  \mathscr{D}_{\mathbb{C}}[s] \otimes_{\mathscr{O}_{\mathbb{C}}[s]} \Der_{\mathbb{C}}[s]$ and $G^0 = \mathscr{D}_{\mathbb{C}}[s]$. The nontrivial differential is given by $P \otimes \delta \mapsto P \delta$. Fix a generator $\alpha$ of $\Der_{\mathbb{C}}[s]$. Then our $G^\bullet$ is isomorphic to the complex $0 \to \mathscr{D}_{\mathbb{C}}[s] \xrightarrow[]{\cdot \alpha} \mathscr{D}_{\mathbb{C}[s]}$ induced by right multiplication with $\alpha$. As $\mathscr{D}_{\mathbb{C}}[s]$ is torsion-free,  $G^\bullet$ is acylic.

    It remains to show that $\Sp_{\pr_2^\star h}^\bullet \simeq G^\bullet \boxtimes \Sp_h^\bullet$. We may do this locally, so pick $(t, \partial_t)$ as coordinates for $\mathbb{C}$ and $(y, \partial y)$ as coordinates for $Y$. The conjunction of their pullbacks give coordinates for $X$. Since $\pr_2^\star h$ only uses $y$-coordinates, $\Der_{X}(\log \pr_2^\star h) \simeq \pr_1^\star \Der_{\mathbb{C}} \oplus \pr_2^\star \Der_{Y}(\log h)$ and 
    \begin{align*} \label{eqn-extTensor1}
    \bigwedge^r \Der_{X}(\log \pr_2^\star h)[s] \simeq 
    &\big(\pr_1^\star \Der_{\mathbb{C}}[s] \wedge \bigwedge^{r-1} \pr_2^\star \Der_{Y}(\log h)[s] \big) \\
    &\oplus \big( \pr_1^\star \mathscr{O}_\mathbb{C}[s] \wedge \bigwedge^r \pr_2^\star \Der_{Y}[s](\log h) \big). \nonumber
    \end{align*}
    It follows that $\Sp_{\pr_2^\star h}^{-r} \simeq \big( G^{-1} \boxtimes \Sp_{h}^{-(r-1)} \big) \oplus \big( G^0 \boxtimes \Sp_h^{-r} \big)$. We must confirm this isomorphism is compatible with the various differentials to deduce $\Sp_{\pr_2^\star h}^\bullet \simeq G^\bullet \boxtimes \Sp_h^\bullet$. Examining the definition of differential $\epsilon^{-r}$ of $\Sp_{\pi_2^\star h}^\bullet$, this reduces to checking that the ``mixed term'' $\sum_{1 \leq j \leq r-1}(-1)^{2+j} 1 \otimes  [\pr_1^\star \partial_t, \pr_2^\star \lambda_j] \wedge \widehat{\pr_2^\star \lambda_j}$ vanishes for any $\lambda_1, \dots, \lambda_{r-1} \in \Der_Y(\log h)$. But this is immediate: each commutator $[\pr_1^\star \partial_t, \pr_2^\star \lambda_j]$ vanishes since $\partial_t$ uses only $(t, \partial_t)$-variables and $\lambda_j$ uses only $(y, \partial y)$-variables along with the central variable $s$. 
\end{proof}

And now the promised result:

\begin{proposition} \text{\normalfont (Theorem 1.6.4 of \cite{MorenoMacarroLogarithmic})} \label{prop- free, Saito, Spencer resolution}
Suppose that $f$ is Saito-holonomic and free. Then at each $\mathfrak{x} \in X$, the augmented complex
\[
\Sp_f^\bullet \to M_{f}^{\log}
\]
is a free $\mathscr{D}_{X,\mathfrak{x}}[s]$-resolution of $M_{f,\mathfrak{x}}^{\log}.$
\end{proposition}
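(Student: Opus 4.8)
The plan is to apply the analytic form of our acyclicity lemma (Lemma \ref{lemma-noncommutative acyclicity analytic}) to the Spencer complex $\Sp$, so the burden reduces to two things: (1) checking the projective-dimension hypothesis $\pdim(\text{Sp}^{-r}) \leq r$ for each term (shifted so the complex is indexed as in our lemma), and (2) controlling the $\mathscr{O}_{X}$-support of the homology modules $H_{-r}(\Sp)$ for $r > 0$ by an inductive argument on $\dim X$ exploiting the logarithmic stratification. Since $f$ is free, $\theta_{f}(s)$ is a free $\mathscr{O}_{X,\mathfrak{x}}[s]$-module of rank $n$ (the map \eqref{eqn - map giving theta} is injective and carries a free generating set of $\Der_{X,\mathfrak{x}}(-\log f)$ to an $\mathscr{O}_{X,\mathfrak{x}}[s]$-basis of $\theta_f(s)$), so $\text{Sp}^{-r} = \mathscr{D}_{X,\mathfrak{x}}[s] \otimes_{\mathscr{O}_{X,\mathfrak{x}}} \bigwedge^{r}\theta_{f}(s)$ is a free $\mathscr{D}_{X,\mathfrak{x}}[s]$-module of rank $\binom{n}{r}$; in particular $\pdim(\text{Sp}^{-r}) = 0$, which is more than enough for the hypothesis $\pdim(M_{q}) \leq n - q$ after reindexing $M_{q} := \text{Sp}^{-(n-q)}$. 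The augmentation target is $M_{f}^{\log}$ by construction, so once acyclicity is established we are done. Note $\Sp$ has only $n+1$ terms, matching the length in Lemma \ref{lemma-noncommutative acyclicity analytic}.

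The heart is step (2): the homology $H_{-r}(\Sp)$ for $r \geq 1$. I would induct on $n = \dim X$. If $\mathfrak{x}$ lies in a positive-dimensional logarithmic stratum $\sigma$, Remark \ref{rmk- details of working hypotheses}(c),(d) gives a product decomposition $(X,\operatorname{div}(f),\mathfrak{x}) \simeq \mathbb{C}^{\dim\sigma}\times(X',\operatorname{div}(f'),\mathfrak{x}')$ together with a splitting $\Der_{X,\mathfrak{x}}(-\log f) = \bigoplus_{k}\mathscr{O}_{X,\mathfrak{x}}\partial_{k} \oplus \mathscr{O}_{X,\mathfrak{x}} \oplus \Der_{X',\mathfrak{x}'}(-\log f')$; one checks $\theta_{f}(s)$ splits compatibly (the coordinate vector fields $\partial_k$ map to themselves, the Euler-type generator to an $s$-shifted copy), and hence the Spencer complex of $f$ is, up to the evident Koszul factor for the free/abelian directions, the Spencer complex of $f'$ — reducing to the lower-dimensional case $X'$ where $f'$ is again free and Saito-holonomic. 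By Saito-holonomicity we may therefore assume $\mathfrak{x}$ is a zero-dimensional stratum and that $\Sp$ is already exact in negative degrees at every nearby point $\mathfrak{y} \neq \mathfrak{x}$ (using freeness and Saito-holonomicity of $f$ in a punctured neighborhood, where every point sits in a positive-dimensional stratum). This means each $H_{-r}(\Sp)$ for $r \geq 1$ is supported only at $\mathfrak{x}$, i.e. on a set of dimension $0 \leq i-1$ for every $i \geq 1$ — precisely the hypothesis of Lemma \ref{lemma-noncommutative acyclicity analytic} in the simplified form of Remark \ref{rmk- analaytic vanish outside of points simplification}. Applying that lemma yields acyclicity at $\mathfrak{x}$, completing the induction.

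The main obstacle I anticipate is verifying that the Spencer complex genuinely respects the product/splitting structure from Remark \ref{rmk- details of working hypotheses}(d) — i.e., that $\theta_{f}(s)$ (as opposed to $\Der_{X,\mathfrak{x}}(-\log f)$ itself) decomposes as a Lie–Rinehart algebra over $\mathscr{O}_{X,\mathfrak{x}}[s]$ in a way making $\Sp_f \simeq (\text{Koszul})\otimes\Sp_{f'}$ as complexes of $\mathscr{D}_{X,\mathfrak{x}}[s]$-modules, so that exactness in negative degrees for $f'$ transfers to $f$. This requires tracking how the map \eqref{eqn - map giving theta} interacts with the coordinate change: the coordinate directions $\partial_{n-\dim\sigma},\dots,\partial_n$ have $\partial_k\bullet f = 0$ so map to themselves, the Euler-type summand $\mathscr{O}_{X,\mathfrak{x}}$ contributes a generator of the form $E - s$ (after normalizing $E\bullet f = f$), and the $\Der_{X',\mathfrak{x}'}(-\log f')$ summand maps into $\ann_{\mathscr{D}_{X',\mathfrak{x}'}[s]}(f')^{s}$; the brackets among these three blocks vanish appropriately, so the Chevalley–Rinehart differential is a total differential of a tensor product and one can compute homology by a Künneth-type argument. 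The other, more routine, point to be careful about is the reindexing convention so that the hypothesis $\pdim(M_q)\le n-q$ lines up with $\pdim(\text{Sp}^{-r})=0\le r$; this is immediate since all Spencer terms are free.
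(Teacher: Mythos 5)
Your proposal follows essentially the same strategy as the paper's proof: induct on $\dim X$, use the logarithmic product decomposition plus a K\"unneth argument to reduce to the case where $\mathfrak{x}$ lies in a zero-dimensional stratum, then invoke Saito-holonomicity to see the higher homology of $\Sp$ is supported only at $\mathfrak{x}$ and apply Lemma \ref{lemma-noncommutative acyclicity analytic} (via Remark \ref{rmk- analaytic vanish outside of points simplification}). The one thing you omit is an explicit base case for the induction --- the paper treats $\dim X = 1$ directly, showing $\Sp$ reduces to right multiplication by $x\partial - s$ on $\mathscr{D}_{X,\mathfrak{x}}[s]$, which is injective; you should include something along these lines since your reduction to nearby points $\mathfrak{y} \neq \mathfrak{x}$ at a zero-dimensional stratum also relies on the inductive hypothesis in lower dimension.
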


\begin{proof}
First note that since $\Der_{X,\mathfrak{x}}(-\log f)$ is free by hypothesis, all we must do is show $\Sp_f^\bullet$ is acyclic at $\mathfrak{x}.$ This can be proved in local coordinates. We are free to pick a different choice of defining equation of $\text{div}(f)$ at $\mathfrak{x}$ since $\Sp_f^\bullet$ is well-behaved with respect to replacing $f$ with $(\text{unit}) \cdot f$. 

Proceed by induction on $\dim X$. When $\dim X = 1$, then after a change of coordinates and of defining equation, we may assume $\mathfrak{x} = 0$, our $\text{div}(f)$ at $0$ is defined by $x$, and $\Der_{X,0}(-\log x) = \mathscr{O}_{X,0} \cdot x \partial$. Then $\Sp_f^\bullet$ at $\mathfrak{x}$ corresponds to the complex $0 \to \mathscr{D}_{X,0}[s] \to \mathscr{D}_{X,0}[s]$ with the second map given by right multiplication with $x \partial - s$. So $\Sp_f^\bullet$ is acyclic and we are done with the base case. 

If the claim holds whenever $\dim X < n$, assume $\dim X = n$. If $\mathfrak{x}$ belongs to a positive dimensional stratum $\sigma$, then thanks to Remark \ref{rmk- details of working hypotheses}.(e) (see the last item in particular), we can find an appropriate local analytic isomorphism so that $(X, \text{div}(f), \mathfrak{x}) \simeq (\mathbb{C}, \mathbb{C}, 0) \times (Y, \text{div}(h), \mathfrak{y})$ and so
\begin{equation*}
    \Der_{X, \mathfrak{x}} (-\log \pr_2^\star h) = \Der_{X,\mathfrak{x}}(-\log f) = \pr_1^\star \Der_{\mathbb{C}, 0} \oplus \pr_2^\star \Der_{Y,\mathfrak{y}}(-\log h).
\end{equation*}
Replacing $X$ and $Y$ with sufficiently small opens, we may assume the above isomorphism holds globally. Since $h \in \mathscr{O}_X$ is free, this isomorphism forces $h \in \mathscr{O}_Y$ to be free. Moreover, since $f$ is Saito-holonomic, so is $h$. Thus $h$ and $\Sp_h^\bullet$ succumb to our inductive hypothesis: $\Sp_h^\bullet$ is an acyclic complex of free $\mathscr{D}_Y[s]$-modules. Using Lemma \ref{lem-externalTensor}, we deduce that $\Sp_f^\bullet$ is acylic at $\mathfrak{x}$. 

All that remains is the case where $\mathfrak{x}$ belongs to a zero dimensional logarithmic stratum. By Saito-holonomicity, we can assume $\Sp_f^\bullet$ is acyclic at all points near, but distinct from, $\mathfrak{x}$. This means that for $i \neq 0$, the homology modules $H_{i} \Sp_f^\bullet$ vanish at all points near, but distinct from, $\mathfrak{x}$. Using Lemma \ref{lemma-noncommutative acyclicity analytic} (see also Remark \ref{rmk- analaytic vanish outside of points simplification}), we conclude $\Sp_f^\bullet$ is acyclic. 
\end{proof}

\begin{remark} \label{rmk - future use case, tame} \text{ }
\begin{enumerate} [label=(\alph*)]
    \item In Theorem 1.6.4 of \cite{MorenoMacarroLogarithmic}, the language ``Koszul free'' is used, i.e. the principal symbols of $\delta - s (\delta \bullet f)/f$ (see \eqref{eqn - map giving theta}) form a regular sequence in the appropriate associated graded ring. While it is known that Koszul free is equivalent to free and Saito-holonomic, our proof of Proposition \ref{prop- free, Saito, Spencer resolution} avoids this fact or any knowledge about the behavior of $\Sp_f^\bullet$ under associated graded constructions. In this way we hope it's proof is more versatile. 
    \item Note that $f$ being reduced is not necessary in the construction of $\Sp_f^\bullet$ and is not used in the proof of Proposition \ref{prop- free, Saito, Spencer resolution}.
    \item Since Proposition \ref{prop- free, Saito, Spencer resolution} is just a demonstration, we have used the less cumbersome notation associated with $f^{s}$ and Bernstein--Sato polynomials instead of $F^{S} = f_{1}^{s_{1}} \cdots f_{r}^{s_{r}}$ and multivariate Bernstein--Sato polynomials. We know the same version of Proposition \ref{prop- free, Saito, Spencer resolution} holds in this case (see Proposition 5 of \cite{MaisonobeFreeHyperplanes}, and the generalization within Theorem 3.9 of \cite{Bath2}). The same argument of Proposition \ref{prop- free, Saito, Spencer resolution} gives a new proof in the multivariate setting.
    \item When $M_{f,\mathfrak{x}}^{\log} \simeq M_{f,\mathfrak{x}}$ it is much easier to find roots of the Bernstein--Sato polynomial of $f$. In most stories, a crucial step in verifying this isomorphism is proving $\Ext_{\mathscr{D}_{X,\mathfrak{x}}[s]}^{k}(M_{f,\mathfrak{x}}^{\log}, \mathscr{D}_{X,\mathfrak{x}}[s])$ vanishes for $k > n$. Proposition \ref{prop- free, Saito, Spencer resolution} does this. (This happens in \cite{MorenoMacarroLogarithmic} and is part of the set-up in \cite{uli}, where it happens on the associated graded side.) When this isomorphism holds for $f$ a hyperplane arrangement, the $-n/d$ conjecture and the Strong Monodromy Conjecture hold for $f$ by the argument of Theorem 5.13 of \cite{uli}.
    \item It would be very interesting to find a version of Proposition \ref{prop- free, Saito, Spencer resolution} where freeness is replaced by \emph{tameness}. A divisor $f$ is tame if the logarithmic $k$-forms have projective dimension at most $k$ (see Definition 3.8 of \cite{uli}, though the notion did not originate here). In fact, the sliding projective dimension bound of Peskine--Szpiro Acyclity Lemma (and hence of the noncommutative variant) align with the ones used in tameness. 
    \item While tameness and Saito-holonomicity are the purview of the salient parts of \cite{uli}, Walther also mandates ``strongly Euler-homogeneous'' so that associated graded constructions can be used. A ``tame version'' of Proposition \ref{prop- free, Saito, Spencer resolution} might unlock the non-strongly Euler-homogeneous case.
    \item Proposition \ref{prop- free, Saito, Spencer resolution} plays a role in the well-studied Logarithmic Comparison Theorem, c.f. \cite{MorenoMacarroLogarithmic} for a description of the problem. A ``modern'' approach involves specializing the acyclic complex $\Sp$ at $s = -1$, see Corollary 4.5 of \cite{MAcarroDuality}. Again, a ``tame version'' of Proposition \ref{prop- free, Saito, Spencer resolution} would give a path for studying the Logarithmic Comparison Theorem for tame divisors.
\end{enumerate}

\end{remark}

\begin{example} \label{ex - Reiffen plane curve}
Let $f = x^{4} + xy^{4} + y^{5}$. Since $X = \mathbb{C}^{2}$ has dimension $2$, our $f$ is automatically Saito-holonomic and free. So Proposition \ref{prop- free, Saito, Spencer resolution} automatically applies. However, Macaulay2 insists that a generating set of $\ann_{\mathscr{D}_{\mathbb{C}^{2}, 0}[s]} f^{s}$ has order two operators. (Or use Theorem 4.7 of \cite{MAcarroDuality}.) So $M_{f,0}^{\log} \twoheadrightarrow M_{f, 0}$ is not injective and the map's kernel $K_{f,0}^{\log}$ is nonzero. Using elementary facts about relative holonomicity (Definition 3.2.3 of \cite{ZeroLociI}, i.e. major{\'e} par
une lagrangienne from \cite{Maisonobe}), the long exact sequence arising from applying $\Hom_{\mathscr{D}_{\mathbb{C}^{2},0}[s]}(-, \mathscr{D}_{\mathbb{C}^{2},0}[s])$ to the short exact sequence \eqref{eqn - the log ses}, and Proposition \ref{prop- free, Saito, Spencer resolution}, we deduce 
\begin{equation} \label{eqn- ext modules logarithmic kernel}
\Ext_{\mathscr{D}_{\mathbb{C}^{2},0}[s]}^{k}(K_{f, 0}^{\log}, \mathscr{D}_{\mathbb{C}^{2},0}[s]) \neq 0 \iff k = \dim X = 2.
\end{equation}
The same holds for free and Saito-holonomic divisors in general: either $K_{f,\mathfrak{x}}^{\log} = 0$ or \eqref{eqn- ext modules logarithmic kernel} holds (where just $k = \dim X$ is used.) 
\end{example}

\subsection{Use cases, quasi-free structures and multiarrangements} In \cite{QuasiFree}, the authors consider Spencer complexes similar to Definition \ref{def - log spencer complex} except: they replace $s$ with an element of $\mathbb{C}$; their complexes are assembled out of a special free $\mathscr{O}_X$-submodule of $\Der_X(-\log f)$ closed under brackets instead of being assembled out of $\Der_X(-\log f)$ (or, when adding $s$-data, a special free submodule of $\theta_f(s)$ closed under brackets). The special free Lie algebroid is called a \emph{quasi-free structure} in loc. cit. When a quasi-free structure exists the construction gives a nice complex of free modules, but it seems at least as hard to verify it is acyclic than in the original setting. And certifying acylicity is crucial for any potential applications of quasi-free structures. In this subsection we track and modify the construction of \cite{QuasiFree} for free multi-derivations of hyperplane arrangements, and use Lemma \ref{lemma-noncommutative acyclicity analytic} to prove the resulting Spencer-esque complex is acyclic. This result is new. 

Let us set up. In this subsection, $f \in R = \mathbb{C}[x_1, \dots, x_n]$ always defines a reduced hyperplane arrangement of $d$ hyperplanes equipped with a factorization $f = \ell_1 \cdots \ell_d$ into pairwise coprime linear polynomials. Recall the intersection lattice of $f$ is the poset 
\begin{equation*}
    \left\{ \bigcap_{k \in K} \Var(\ell_k) \mid \text{ for all } K \subseteq \{1, \dots, d\} \right\},
\end{equation*} 
whose members are called flats. 

For $\textbf{m} \in \mathbb{Z}_{\geq 1}^d$ we define the \emph{algebraic multi-derivations} of $f$ with respect to $\mathbf{m}$ to be
\begin{equation*}
    \Der_R(-\log f; \mathbf{m}) = \{ \delta \in \Der_R \mid \delta \bullet \ell_k \in R \cdot \ell_k^{m_k} \text{ for all } 1 \leq k \leq d\}.
\end{equation*}
Let $X = \mathbb{C}^n$ and $\mathscr{O}_X$ be the analytic structure sheaf. Analytifying $\Der_R(-\log f; \mathbf{m})$ we obtain the $\mathscr{O}_X$-module $\Der_X(-\log f; \mathbf{m})$, the \emph{analytic multi-derivations} of $f$ with respect to $\textbf{m}$. One checks that 
\begin{equation*}
    \Der_X(-\log f; \mathbf{m}) \subseteq \Der_X(-\log f ; \mathbf{1}) = \Der_X(-\log f).
\end{equation*}
One also checks that $\Der_X(-\log f; \mathbf{m})$ is closed under brackets. We say $\Der_X(-\log f; \mathbf{m})$ is \emph{free} when it is a free $\mathscr{O}_X$-module locally everywhere. In this case it satisfies conditions (a) and (b) of Definition 2.1 \cite{QuasiFree}. (That it satisfies condition (b) is Theorem 1.14 \cite{YoshinagaFreenessSurvey}). We have no idea whether or not condition (c) holds. In the spirit of \cite{QuasiFree}, such a $\Der_X(-\log f; \mathbf{m})$ is a \emph{weak quasi-free structure}.

Now we use the analytic multi-derivations $\Der_X(-\log f ; \textbf{m})$ to fashion a Spencer-esque complex ala Definition \ref{def - log spencer complex}\footnote{Mirroring Definition \ref{def - log spencer complex}, the construction does not require $\Der_X(-\log f; \textbf{m})$ to be free.}.  First, set $\theta_f(s ; \textbf{m})$ to be the $\mathscr{O}_X[s]$-module 
\begin{equation} \label{eqn-thetaMultiDerivations}
    \theta_f(s ; \textbf{m}) = \mathscr{O}_X[s] \cdot \left\{ \delta - s \frac{\delta \bullet f}{f^{\textbf{m}}} \mid \delta \in \Der_X(-\log f; \textbf{m}) \right\}.
\end{equation}
That $(\delta \bullet f) / f^{\textbf{m}} \in \mathscr{O}_X$ uses the definition of multi-derivations. Also note that $\theta_f(s ; \textbf{m})$ is closed under brackets by construction. We are ready for the complex:

\begin{define}
    Let $f \in R$ define a hyperplane arrangement as before. Let $\Der_X(-\log f ; \textbf{m})$ be the module of analytic multi-derivations and $\theta_{f ; \textbf{m}}(s)$ the $\mathscr{O}_X[s]$-module of \eqref{eqn-thetaMultiDerivations}. The \emph{Spencer-esque complex} $\Sp_{f; \textbf{m}}^\bullet$ associated to $\theta_f(s ; \textbf{m})$ is a complex of left $\mathscr{D}_X[s]$-modules whose objects are
    \begin{equation*}
        \Sp_f^{-r} = \mathscr{D}_X[s] \otimes_{\mathscr{O}_X[s]} \bigwedge^r \theta_f(s; \textbf{m}). 
    \end{equation*}
    The left $\mathscr{D}_X[s]$-linear differential $\epsilon^{-r}: \Sp_f^{-r} \to \Sp_f^{-(r+1)}$ is defined analogously to the differential of Definition \ref{def - log spencer complex}. In particular, the augmented complex is
    \begin{equation*}
        \Sp_{f; \textbf{m}}^\bullet \to \frac{\mathscr{D}_X[s]}{\mathscr{D}_X[s] \cdot \theta_{f; \textbf{m}}(s)}. 
    \end{equation*}
\end{define}

When the analytic multi-derivations are free, we can prove $\Sp_{f ; \textbf{m}}^\bullet$ is acylic. As the proof is spiritually the same as the proof of Proposition \ref{prop- free, Saito, Spencer resolution} we only give a sketch.

\begin{proposition} \label{prop-SpencerAcyclicMultiDerivations}
    Let $f \in \mathbb{C}[x_1, \dots, x_n]$ be a hyperplane arrangement, $\textbf{m} \in \mathbb{Z}_{\geq 1}^d$, $X = \mathbb{C}^n$. Suppose that the analytic multi-derivations $\Der_X(-\log f; \textbf{m})$ are free. Then at each $\mathfrak{x} \in X$, the augmented complex
    \begin{equation*}
        \Sp_{f; \textbf{m}}^\bullet \to \frac{\mathscr{D}_X[s]}{\mathscr{D}_X[s] \cdot \theta_{f; \textbf{m}}(s)}
    \end{equation*}
    is a free $\mathscr{D}_{X,\mathfrak{x}}[s]$-resolution of $\mathscr{D}_X[s] / \mathscr{D}_X[s] \cdot \theta_{f; \textbf{m}}(s)$.
\end{proposition}

\begin{proof}
    We give enough of a sketch to explain how a modified proof of Proposition \ref{prop- free, Saito, Spencer resolution} applies here. It is enough to show that $\Sp_{f,\textbf{m}}^\bullet$ is acylic at $\mathfrak{x} \in X$. When $n=1$ then, working locally, we may assume that $\mathfrak{x} = 0$, $f = x \in \mathbb{C}[x]$, and $\mathbf{m} = m$. Then $\Der_{X}(-\log f; \mathbf{m})$ is generated by $x^{m} \partial_x$ and so $\theta_{f; \textbf{m}}(s)$ is generated by $x^m \partial_x - s$. To show $\Sp_{f; \textbf{m}}^\bullet$ is acyclic at $\mathfrak{x}$, it is enough to show the complex $0 \to \mathscr{D}_X[s] \to \mathscr{D}_X[s]$ induced by right multiplication with $x^m \partial_x - s$ is acylic, which is immediate. 

    Now assume the claim holds for $\mathbb{C}^t$ where $t < n$. Change coordinates to assume $\mathfrak{x} = 0$. Consider the intersection lattice of the arrangement $V(f)$. If $0$ belongs to a positive dimensional flat, then after a (linear) change of coordinates, we may find a $g \in \mathbb{C}[x_1, \dots, x_{n-1}] \subseteq \mathbb{C}[x_1, \dots, x_n] \ni f$ such that $f = (\text{unit}) \cdot g$ at $\mathfrak{x}$. In particular, 
    \begin{align*}
        \Der_{X, 0}(-\log f; \textbf{m}) 
        &= \{\delta \in \Der_{X,0} \mid \delta \bullet \ell_k \in \mathscr{O}_{X,0} \cdot \ell_k^{m_k} \text{ for all } k \} \\
        &= \{\delta \in \Der_X \mid \delta \bullet \ell_k \in \mathscr{O}_{X,0} \cdot \ell_k^{m_k} \text{ for all } k \text{ where } \Var(\ell_k) \subseteq \Var(g) \} \\ 
        &= \Der_{X, 0}(-\log g ; \textbf{m}_|g).
    \end{align*}
    Here $\textbf{m}_{|g} \in \mathbb{Z}_{\geq 1}^{\deg(g)}$ is induced by $\textbf{m}$: it only remembers the multiplicities $m_k$ corresponding to hyperplanes $V(\ell_k) \subseteq V(g)$. 

    We had been considering $g \in \mathbb{C}[x_1, \dots, x_n]$. When regarding it inside $\mathbb{C}[x_1, \dots, x_{n-1}]$ call it $h$. Using the product structure $\mathbb{C}^n \simeq \mathbb{C} \times \mathbb{C}^{n-1}$ with projections $\pr_i$ onto the $i^{\text{th}}$ factors, the above shows that 
    \begin{align*}
        \Der_{X,0}(-\log \pr_2^{\star} h; \textbf{m}_{| g}) 
        &= \Der_{X,0} (-\log f; \textbf{m}) \\
        &= \pr_1^\star \Der_{\mathbb{C},0} \oplus \pr_2^\star \Der_{\mathbb{C}^{n-1}, 0}(-\log h ; \textbf{m}_{|g}).
    \end{align*}
    The freeness of $\Der_{X,0}(-\log f; \textbf{m})$ is equivalent to the freeness of $\Der_{\mathbb{C}^{n-1},0}(-\log h; \textbf{m}_{|g})$. Hence the latter falls into our inductive set-up and $\Sp_{h; \mathbf{m}_{|g}}^\bullet$ must be acyclic. An argument entirely similar to that justifying Lemma \ref{lem-externalTensor} shows that the acylicity of $\Sp_{h; \mathbf{m}_{|g}}^\bullet$ guarantees the acylicity of $\Sp_{f; \mathbf{m}}^\bullet$ at $0$. 

    Thus, we may assume that $\Sp_{f; \mathbf{m}}^\bullet$ is acyclic at all points $\mathfrak{x} \in X$ not belonging to zero dimensional flats of the intersection lattice of $f$. That is, $\Sp_{f; \mathbf{m}}^\bullet$ is acyclic outside of a finite set of points. The ayclicity of $\Sp_{f; \mathbf{m}}^\bullet$ locally everywhere follows by Lemma \ref{lemma-noncommutative acyclicity analytic} (see also Remark \ref{rmk- analaytic vanish outside of points simplification}), just as in Proposition \ref{prop- free, Saito, Spencer resolution}. 
\end{proof}

\begin{remark}
    \noindent 
    \begin{enumerate}[label=(\alph*)]
        \item Recall the definition of $\theta_{f; \textbf{m}}(s)$. By replacing the $s$ in $\delta - s(\delta \bullet f)/f^{\textbf{m}}$ with a constant $\alpha \in \mathbb{C}$, one can assemble a Spencer-esque complex of left $\mathscr{D}_X$-modules similar to $\Sp_{f, \mathbf{m}}^\bullet$. Similarly by replacing the denominator $f^{\textbf{m}}$ with $f$. An argument akin to Proposition \ref{prop-SpencerAcyclicMultiDerivations} shows these new complexes are also acyclic. 
        \item One can check (an appropriately adjusted) Proposiiton \ref{prop-SpencerAcyclicMultiDerivations} holds in the algebraic category, essentially because the coordinate changes utilized in the induction are all linear. Proposition \ref{prop- free, Saito, Spencer resolution} requires the analytic category as the corresponding coordinate changes may be analytic.
        \item It is not clear to us what the relationship between $\mathscr{D}_X[s] / \mathscr{D}_X[s] \cdot \theta_{f; \mathbf{m}}(s)$ and $M_f^{\log}$ is. So we do not know what Proposition \ref{prop-SpencerAcyclicMultiDerivations} implies about $\mathscr{D}_X[s] f^s$.
    \end{enumerate}
\end{remark}

\bibliographystyle{abbrv}
\bibliography{refs}

\end{document}